\newcommand{\bracket}[1]{\ensuremath{\left[ #1 \right]}}
\newcommand{\braces}[1]{\ensuremath{\left\{ #1 \right\}}}
\newcommand{\parenth}[1]{\ensuremath{\left( #1 \right)}}
\newcommand{\refeqn}[1]{(\ref{eqn:#1})}
\newcommand{\tr}[1]{\mbox{tr}\ensuremath{\negthickspace\bracket{#1}}}
\newcommand{\trs}[1]{\mbox{tr}\ensuremath{\!\bracket{#1}}}
\newcommand{\SO}{\ensuremath{\mathsf{SO(3)}}}
\newcommand{\T}{\ensuremath{\mathsf{T}}}
\newcommand{\so}{\ensuremath{\mathfrak{so}(3)}}
\renewcommand{\Re}{\ensuremath{\mathbb{R}}}
\newcommand{\Sph}{\ensuremath{\mathsf{S}}}
\newcommand{\RNum}[1]{\uppercase\expandafter{\romannumeral #1\relax}}
\newcommand{\RI}{\text{\RNum{1}}}
\newcommand{\RII}{\text{\RNum{2}}}
\newcommand{\RIII}{\text{\RNum{3}}}
\DeclareMathOperator*{\argmin}{arg\,min}
\title{\LARGE \bf
Global Exponential Attitude Tracking Controls on $\SO$}
\author{Taeyoung Lee
\thanks{Taeyoung Lee, Mechanical and Aerospace Engineering, George Washington University, Washington DC 20052 {\tt tylee@gwu.edu}}
\thanks{This research has been supported in part by NSF under the grants CMMI-1243000 (transferred from 1029551), CMMI-1335008, and CNS-1337722.}
}
\theoremstyle{definition}
\newtheorem{definition}{Definition}
\newtheorem{prop}{Proposition}
\begin{document}
\allowdisplaybreaks
\maketitle 

\begin{abstract}
This paper presents four types of tracking control systems for the  attitude dynamics of a rigid body. First, a smooth control system is constructed to track a given desired attitude trajectory, while guaranteeing almost semi-global exponential stability. It is extended to achieve global exponential stability by using a hybrid control scheme based on multiple configuration error functions. They are further extended to obtain robustness with respect to a fixed disturbance using an integral term.  The resulting robust, global exponential stability for attitude tracking is the unique contribution of this paper, and these are developed directly on the special orthogonal group to avoid singularities of local coordinates, or ambiguities associated with quaternions. The desirable features are illustrated by numerical examples. 
\end{abstract}

\section{Introduction}


The attitude dynamics of a rigid body have been extensively studied under various assumptions~\cite{Hug86,WenKreITAC91}. One of the distinct features of the attitude dynamics is that it evolves on a nonlinear manifold, namely the three-dimensional special orthogonal group. This yields unique stability properties that cannot be observed from dynamic systems on a linear space. For example, it has been shown that there exists no continuous control system that asymptotically stabilizes an attitude globally~\cite{BhaBerSCL00}.


Such topological obstruction in attitude stabilization has been dealt with two distinct approaches. In~\cite{MaiBerITAC06,ChaSanICSM11}, smooth attitude control systems are designed, guaranteeing \textit{almost} global asymptotic stability, where the region of attraction excludes  only a set of zero measure. This can be considered as the strongest stability property for smooth attitude control systems. 
On the other hand, a hysteresis-based switching algorithm is introduced to achieve global asymptotic stability~\cite{MaySanITAC11,MayTeePACC11,SchLorA12}, and a similar approach has been developed for the spherical orientation of reduced attitude tracking in~\cite{MayTeeA13}. A switching algorithm with an almost non-increasing Lyapunov function is constructed for global asymptotic stability with underactuated control inputs~\cite{CasAstA08}. 
But these results are based on either LaSalle's principle or hybrid invariance principles, and therefore, they only guarantee asymptotic stability, and robustness with respect to uncertainties has not been addressed in achieving global attractiveness  in attitude controls.

Attitude control systems can also be categorized with the choice of attitude representation. It is well known that minimal attitude representations, such as Euler angles or modified Rodriguez parameters, suffer from singularities~\cite{StuSR64}. They are not suitable for large angle rotational maneuvers, as the type of representation should be switched frequently to avoid the region of singularities. Quaternions do not have singularities but, as the three-sphere double-covers the special orthogonal group, a single attitude may be represented by two antipodal points on the three-sphere. This ambiguity should be carefully resolved in quaternion-based attitude control systems~\cite{MaySanITAC11}, otherwise they may exhibit unwinding, where a rigid body unnecessarily rotates through a large angle even if the initial attitude error is small~\cite{BhaBerSCL00}. To avoid these, an additional mechanism to lift measurements of attitude onto the three-sphere is introduced~\cite{MaySanITAC11}.

In this paper, four types of attitude control systems are presented to follow a given desired attitude trajectory. A smooth attitude control system is developed for almost semi-global exponential stability, and a hybrid control system with a new form of direction-based configuration error functions is introduced for global exponential stability with simpler controller structures. Each of them is extended with a unique integral control term to achieve robust global exponential stability in the presence of disturbance.

The proposed attitude control systems have the following distinct features. First, they provide stronger exponential stability. The attitude control systems in the aforementioned papers rely on the invariance principle, or an exogenous system is introduced to reformulate a tracking problem into stabilization of an autonomous system~\cite{MaySanITAC11,MayTeePACC11}, thereby yielding asymptotic stability. In this paper, rigorous Lyapunov stability analysis is presented to guarantee stronger, uniform exponential stability for each of four attitude control systems.


Second, a new intuitive form of attitude configuration error functions is introduced to simplify the design of hybrid attitude control systems. Configuration error functions in the prior literature, such as~\cite{MayTeePACC11} are based on compositions with smooth operations representing stretched rotations, and it is not straightforward to obtain proper controller parameters such as a hysteresis gap for stability. In this paper, a family of configuration error functions is constructed by comparing the desired directions with the current directions, and they yield an explicit and compact form of stability criteria. This simplifies the procedure to design  hybrid control systems for global attitude tracking.

Third, a special form of integral term is proposed to achieve robustness with respect to disturbances. Nonlinear PID-like attitude control systems have been studied in~
\cite{SubJAS04,ShoJuaPACC02,SuCaiJGCD11}. %
But, either they have singularities~\cite{SubJAS04,ShoJuaPACC02}, or they are based on the invariance principle that is valid only for attitude stabilization
~\cite{SuCaiJGCD11}. 
The robust attitude controls presented in this paper yield global exponential stability for attitude tracking problems considered as time-varying systems, and they guarantee an exponential convergence of the error in estimating the disturbance, as well as the attitude tracking error variables.

Another distinct feature is that attitude control systems are developed directly on the special orthogonal group. Therefore, singularities or complexities associated with minimal representations are avoided. Also, the ambiguity of quaternions does not have to be addressed by an additional mechanism to avoid the unwinding. In short, the proposed attitude control systems have simpler controller structures, and they provide stronger exponential stability properties as well as robustness.

This paper is organized as follows. An attitude tracking problem is formulated at Section \ref{sec:SO}. In the absence of disturbances, a smooth attitude control system to achieve almost semi-global asymptotic stability and a hybrid attitude control to guarantee global exponential stability are presented at Section \ref{sec:3}, respectively. They are extended with consideration of disturbance at Section \ref{sec:RAT}, which is followed by numerical examples and conclusions. Four types of the attitude control systems presented in this paper are summarized at Figure \ref{fig:FT}.

\begin{figure}
\setlength{\unitlength}{0.045\textwidth}\footnotesize
\begin{picture}(10,4.2)(-6,-1.4)
\put(-6,0.9){\shortstack[c]{Without\\ disturbance\\ $\Delta=0$}}
\put(-3.3,2.3){\shortstack[c]{Smooth control}}
\put(-4.3,0.6){\framebox(4,1.4)[c]{\shortstack[c]{Smooth Attitude Tracking\\(Section \ref{sec:31})}}}
\put(-0.3,1.3){\vector(1,0){0.8}}
\put(1.6,2.3){\shortstack[c]{Hybrid control}}
\put(0.5,0.6){\dashbox{0.08}(4,1.4)[c]{\shortstack[c]{Hybrid Attitude Tracking\\
(Section \ref{sec:GES})}}}
\put(-2.3,0.6){\vector(0,-1){0.6}}
\put(2.5,0.6){\vector(0,-1){0.6}}
\put(-6,-1.1){\shortstack[c]{With\\ disturbance\\ $\Delta\neq0$}}
{\linethickness{1.0pt}
\put(-4.3,-1.4){\framebox(4,1.4)[c]{\shortstack[c]{Smooth Attitude Tracking\\with Disturbance\\ (Section \ref{sec:RAG})}}}}
{\linethickness{1.0pt}
\put(0.5,-1.4){\dashbox{0.08}(4,1.4)[c]{\shortstack[c]{Hybrid Attitude Tracking\\with Disturbance\\ (Section \ref{sec:RGES})}}}}
\end{picture}
\caption{Four types of attitude tracking controls studied in this paper}\label{fig:FT}
\end{figure}
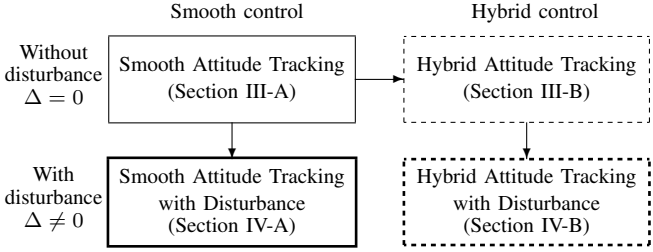

\newcommand{\mb}{\mathbf{m}}

\section{Problem Formulation}\label{sec:SO}

\subsection{Attitude Dynamics}

Consider the attitude dynamics of a rigid body. Define an inertial reference frame and a body-fixed frame. Its configuration manifold is the three-dimensional special orthogonal group:
$\SO = \{R\in\Re^{3\times 3}\,|\, R^TR=I,\;\mathrm{det}[R]=1\}$,
where a rotation matrix $R\in\SO$ represents the transformation of a vector from the body-fixed frame to the inertial reference frame. The equations of motion are given by
\begin{gather}
J\dot\Omega + \Omega\times J\Omega = u+\Delta,\label{eqn:Wdot}\\
\dot R = R\hat\Omega=\hat\omega R,\label{eqn:Rdot}
\end{gather}
where $J\in\Re^{3\times 3}$ is the inertia matrix, and $\Omega\in\Re^3$ is the angular velocity represented with respect to the body-fixed frame. We have $\omega=R\Omega$ that is the angular velocity represented with respect to the inertial frame. The control moment and the unknown, but fixed uncertainty are denoted by $u\in\Re^3$ and $\Delta\in\Re^3$, respectively. It is assumed that the fixed uncertainty is bounded by a known constant $B_\Delta\in\Re$ as
\begin{align}
\|\Delta\| \leq B_\Delta.\label{eqn:Bd}
\end{align}

At \refeqn{Rdot}, the \textit{hat} map $\wedge :\Re^{3}\rightarrow\so$ represents the transformation of a vector in $\Re^3$ to a $3\times 3$ skew-symmetric matrix such that $\hat x y = x\times y$ for any $x,y\in\Re^3$~\cite{BulLew05}. 
More explicitly, 
\begin{align*}
\hat x = \begin{bmatrix} 0 & -x_3 & x_2 \\ x_3 & 0 & -x_1 \\ -x_2 & x_1 & 0\end{bmatrix},
\end{align*}
for $x=[x_1,x_2,x_3]^T\in\Re^3$. 
In some cases, $\hat x y$ is written as $(x)^\wedge y$ for conciseness. The inverse of the hat map is denoted by the \textit{vee} map $\vee:\so\rightarrow\Re^3$. Several properties of the hat map used in this paper are summarized as
\begin{gather}
    x\cdot \hat y z = y\cdot \hat z x,\quad \hat x\hat y z = (x\cdot z) y - (x\cdot y ) z\label{eqn:STP},\\
    \widehat{x\times y} = \hat x \hat y -\hat y \hat x = yx^T-xy^T,\label{eqn:hatxy}\\
R\hat x R^T = (Rx)^\wedge,\quad 
R(x\times y) = Rx\times Ry\label{eqn:RxR}
\end{gather}
for any $x,y,z\in\Re^3$ and $R\in\SO$. Throughout this paper, the standard dot product in $\Re^3$ is denoted as $x\cdot y = x^T y$ for any $x,y\in\Re^n$, and the maximum eigenvalue and the minimum eigenvalue of $J$ are denoted by $\lambda_M$ and $\lambda_m\in\Re$, respectively.

\subsection{Attitude Tracking Problem}

The two-sphere is the manifold of unit-vectors in $\Re^3$, i.e., $\Sph^2=\{q\in\Re^3\,|\, \|q\|=1\}$. Let $b_1,b_2\in\Sph^2=\{q\in\Re^3\,|\, \|q\|=1\}$ be the unit-vectors from the mass center of the rigid body toward two distinct, characteristic points on the rigid body, represented with respect to the body-fixed frame. For example, they may represent the direction of the optical axis for an onboard vision-based sensor. Due to the rigid body assumption, we have $\dot b_1=\dot b_2=0$. Without loss of generality, we assume that $b_1$ is normal to $b_2$, i.e., $b_1\cdot b_2=0$.  If $b_1\cdot b_2\neq 0$, we choose a fictitious $b_3$ as $b_3=\frac{b_1\times b_2}{\|b_1\times b_2\|}$, and rename it as $b_2$. 

From now on, the subscript $i$ is assumed to be $i\in\{1,2\}$.  Let $r_i\in\Sph^2$ be the representation of $b_i$ with respect to the inertial frame, i.e., $r_i = R b_i$. 
Note that $r_i$ may change over time as the rigid body rotates even though $b_i$ is fixed.  Using \refeqn{Rdot}, the kinematics equation for $r_i$ is given by
\begin{align}
\dot r_i = \hat\omega R b_i = \omega\times r_i.
\end{align}



Suppose that a smooth desired attitude trajectory is given by $R_d(t)$, and it  satisfies the following kinematics equation:
\begin{align}
\dot R_d = \hat\omega_d R_d,\label{eqn:Rd}
\end{align}
where $\omega_d\in\Re^3$ is the desired angular velocity expressed in the inertial frame. It is assumed that the desired angular velocity and its derivatives are uniformly bounded. Next, we transform the desired attitude into the desired directions for $r_i$ as
\begin{align}
r_{i_d} = R_d b_i.\label{eqn:rdi}
\end{align}
The desired directions $r_{i_d}\in\Sph^2$ are used to construct a new form of configuration error functions for  hybrid control systems developed later. 
From \refeqn{Rd} and \refeqn{rdi}, we have the following kinematics equation,
\begin{align}
\dot r_{i_d} = \omega_{d}\times r_{i_d}.\label{eqn:rddot}
\end{align}
and they are consistent with the rigid body assumption, i.e., $b_1\cdot b_2 = r_1\cdot r_2 = r_{1_d}\cdot r_{2_d} =0$.
%
%
The goal is to design a control input $u$ such that the attitude $R=R_d$ becomes an exponentially stable equilibrium of the controlled system. 


\section{Attitude Tracking with No Disturbance}\label{sec:3}


In this section, we assume that there is no disturbance, i.e., $\Delta=0$. A smooth control system is first developed for almost semi-global exponentially stability, and a hybrid control system with new set of configuration error functions is proposed for global attitude tracking.

\subsection{Almost Global Attitude  Tracking}\label{sec:31}

Error variables are defined to represent the difference between the desired directions $r_{i_d}$ and the current directions $r_i=Rb_i$. Define the $i$-th configuration error function as
\begin{align}
\Psi_i(R) =\frac{1}{2}\|Rb_i-r_{i_d}\|^2 = 1 - Rb_i\cdot r_{i_d},\label{eqn:Psii}
\end{align}
which represents $1-\cos\theta_i$, where $\theta_i$ is the angle between $Rb_i$ and $r_{i_d}$. Therefore, it is positive definite about $Rb_i=r_{i_d}$ where $\theta_i=0$, and the critical points are given by $Rb_i=\pm r_{i_d}$. 
The $i$-th configuration error vector is defined as
\begin{align}
e_{r_i} & = R^T r_{i_d}\times b_i.\label{eqn:eri}
\end{align}
For positive constants $k_1\neq k_2$, we also define the complete configuration error function and error vector as
\begin{align}
\Psi(R) &= k_1 \Psi_1(R) + k_2 \Psi_2(R),\label{eqn:PsiSO}\\
e_r  &= k_1 e_{r_1} + k_2 e_{r_2}.\label{eqn:er}
\end{align}
The angular velocity error vector is defined as
\begin{align}
e_\Omega = \Omega - R^T\omega_d. \label{eqn:eW}
\end{align}

\begin{prop}\label{prop:errSO}
The error variables \refeqn{Psii}-\refeqn{eW}, representing the difference between the solution of the equations of motion \refeqn{Wdot} and \refeqn{Rdot}, and the given desired trajectory \refeqn{rdi} with \refeqn{rddot}, satisfy the following properties. For $i\in\{1,2\}$,
\renewcommand{\labelenumi}{(\roman{enumi})}
\begin{enumerate}\renewcommand{\itemsep}{3pt}
\item $\dot \Psi_i(R)=e_{r_i}\cdot e_\Omega$, and $\dot\Psi(R)= e_r\cdot e_\Omega$.
\item $\|\dot e_{r_i}\| \leq \|e_\Omega\|$, and $\|\dot e_r\|\leq (k_1+k_2)\|e_\Omega\|$.
\item Let $h_1 = 2\min\{k_2,k_1 \}$, $h_2 = 4\max\{(k_1-k_2)^2,k_2^2,k_1^2\}$,
$h_3 = 4\max\{(k_1+k_2)^2,k_2^2,k_1^2\}$, 
$h_4 = 2(k_1+k_2)$,
$h_5 = 4\min\{(k_1+k_2)^2,k_2^2,k_1^2\}$, and let $\psi$ be a constant satisfying $0<\psi<h_1$. Then, we have
%
%
%
\begin{align}
\frac{h_1}{h_2+h_3}\|e_r\|^2  \leq \Psi(R) \leq \frac{h_1h_4}{h_5(h_1-\psi)}\|e_r\|^2,\label{eqn:PsibSO}
\end{align}
where the upper bound is satisfied when $\Psi(R)\leq \psi$.


\end{enumerate}
\end{prop}
\begin{proof}
See Appendix \ref{sec:errSO}.
\end{proof}

Two stability concepts are introduced as follows. 
\begin{definition}
Consider an equilibrium of a dynamic system located at the origin. The equilibrium is 
\begin{itemize}
\item[(i)] \textit{almost globally asymptotically stable}, if it is asymptotically stable and almost all trajectories converge to it, i.e., the set of the initial states that do not asymptotically converge to the origin has zero Lebesgue measure. 
\item[(ii)] \textit{almost semi-globally exponentially stable}, if it is asymptotically stable, and for almost all initial states, there exist finite controller gains or parameters such that the corresponding trajectory exponentially converges to the origin, i.e., the set of the initial states that cannot not exponentially converge to the origin has zero Lebesgue measure.
\end{itemize}
\end{definition}
\noindent The concept of almost global stability appears in~\cite{MonITAC03,RanSCL01}, and it has been applied to smooth attitude control systems, such as \cite{MaiBerITAC06,ChaSanICSM11}, since it is impossible to achieve global attractivity on $\SO$ due to the topological restriction~\cite{BhaBerSCL00}. Almost semi-globally exponential stability implies that controller parameters can be chosen such that exponential stability is guaranteed for almost all trajectories. The control system presented in this paper guarantees the above two stability properties. 

\begin{prop}\label{prop:AGASSO}
Consider the dynamic system \refeqn{Wdot}, \refeqn{Rdot} with $\Delta=0$. A desired trajectory is given by \refeqn{rddot}. For $k_1,k_2,k_\Omega >0$ with $k_1\neq k_2$, a control input is chosen as
\begin{align}
u & = -e_r -k_\Omega e_\Omega  +(R^T\omega_d)^\wedge JR^T\omega_d
+ JR^T\dot\omega_d.\label{eqn:uSO}
\end{align}
Then, the following properties hold:
\renewcommand{\labelenumi}{(\roman{enumi})}
\begin{enumerate}
\item The set of equilibrium points is given by $\{(R,\omega)\in \SO\times\Re^3\,|\,(R_d,\omega_d), (\exp(\pi \hat r_{1_d})R_d,\omega_d)$, $(\exp(\pi \hat r_{2_d})R_d,\omega_d), (\exp(\pi (r_{1_d}\times r_{2_d})^\wedge)R_d,\omega_d)\}$.
\item The desired equilibrium $(R_d,\omega_d)$ is almost globally asymptotically stable and almost semi-globally exponentially stable, i.e., the set of the following initial conditions that guarantee exponential stability almost cover $\SO\times\Re^3$ when $k_1,k_2$ are sufficiently large:
\begin{gather}
\Psi(R(0)) \leq \psi < 2\min\{k_2,k_1 \},\label{eqn:Psi0SO}\\
\begin{aligned}
& \|e_\Omega(0)\|^2  \leq \frac{2}{\lambda_M}(\psi-\Psi(R(0))).
\end{aligned}\label{eqn:eW0}
\end{gather}
\item The three undesired equilibria are unstable.
\end{enumerate}
\end{prop}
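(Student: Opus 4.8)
The plan is to build a Lyapunov function of the standard ``energy plus cross term'' form and exploit the error-variable identities from Proposition~\ref{prop:errSO}. First I would compute the closed-loop error dynamics. Differentiating $e_\Omega = \Omega - R^T\omega_d$ and using $\dot R = R\hat\Omega$ together with \refeqn{Wdot} (with $\Delta=0$) and the chosen control \refeqn{uSO}, the feedforward terms $(R^T\omega_d)^\wedge J R^T\omega_d + J R^T\dot\omega_d$ should exactly cancel the drift and the $-\hat\Omega R^T\omega_d$ contribution, leaving the clean form $J\dot e_\Omega = -e_r - k_\Omega e_\Omega$. Combined with property (i) of Proposition~\ref{prop:errSO}, namely $\dot\Psi = e_r\cdot e_\Omega$, this is the structure I want.

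For part (i), I would set $\dot R = 0$ and $\dot e_\Omega = 0$: the first forces $e_\Omega = 0$ (so $\Omega = R^T\omega_d$, i.e.\ $\omega=\omega_d$), and then $J\dot e_\Omega = 0$ forces $e_r = 0$, i.e.\ $k_1 e_{r_1} + k_2 e_{r_2} = 0$. Using \refeqn{eri}, $e_{r_i} = R^Tr_{d_i}\times b_i$, and the fact that the critical points of $\Psi_i$ are $Rb_i = \pm r_{d_i}$, one checks $e_{r_i} = 0 \iff Rb_i = \pm r_{d_i}$; since $k_1\neq k_2$ and $e_{r_1}, e_{r_2}$ lie along $b_1, b_2$ respectively (which are orthogonal), $e_r = 0$ forces each $e_{r_i}=0$ separately, yielding the four sign combinations. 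The combination $(+,+)$ gives $R=R_d$; the combination $(-,-)$ must be reconciled with $\det R = 1$ and $b_1\perp b_2$, which gives the $\pi$-rotation about $r_{d_1}\times r_{d_2}$, and the mixed signs give $\pi$-rotations about $r_{d_1}$ and $r_{d_2}$.

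For part (ii), the Lyapunov candidate is $\mathcal{V} = \tfrac12 e_\Omega\cdot J e_\Omega + \Psi(R) + c\, e_r\cdot e_\Omega$ for a small $c>0$. Using properties (i)--(ii) of Proposition~\ref{prop:errSO} and $J\dot e_\Omega = -e_r - k_\Omega e_\Omega$, the derivative is $\dot{\mathcal V} = -k_\Omega\|e_\Omega\|^2 + c\,\dot e_r\cdot e_\Omega + c\, e_r\cdot J^{-1}(-e_r - k_\Omega e_\Omega)$; bounding $\dot e_r\cdot e_\Omega \le (k_1+k_2)\|e_\Omega\|^2$ and $e_r\cdot e_\Omega$ via Young's inequality, and using \refeqn{PsibSO} to relate $\|e_r\|^2$ to $\Psi$, I would show $\dot{\mathcal V}$ is bounded above by a negative-definite quadratic form in $(\|e_\Omega\|, \|e_r\|)$ provided $\Psi$ stays below $h_1 = 2\min\{k_1,k_2\}$ and $c$ is chosen small enough (explicitly in terms of $k_\Omega, k_1, k_2, \lambda_m, \lambda_M$). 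The sublevel-set condition \refeqn{Psi0SO}--\refeqn{eW0} is exactly what guarantees $\Psi(R(t)) \le \psi < h_1$ for all $t\ge 0$: on the initial set, $\mathcal V(0) \le \psi$ (using $\tfrac12 e_\Omega\cdot Je_\Omega \le \tfrac{\lambda_M}{2}\|e_\Omega\|^2$ and the cross-term smallness), and since $\mathcal V$ is nonincreasing and bounds $\Psi$ from above on this region, invariance follows by the usual continuity/bootstrap argument. Then $\mathcal V$ is sandwiched between multiples of $\|e_\Omega\|^2 + \|e_r\|^2$ (again via \refeqn{PsibSO}), and $\dot{\mathcal V} \le -\beta\mathcal V$ gives exponential convergence of $(e_r, e_\Omega)$, hence of $(\Psi, e_\Omega)$. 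Almost global asymptotic stability follows from a separate argument: with $c=0$, $\dot{\mathcal V} = -k_\Omega\|e_\Omega\|^2 \le 0$ globally, LaSalle's invariance principle confines trajectories to $\{e_\Omega = 0, e_r = 0\}$, which by part (i) is the four equilibria, and part (iii) rules out the three undesired ones for all but a measure-zero set of initial conditions.

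For part (iii), I would linearize the closed-loop dynamics about each undesired equilibrium, or more cleanly argue via a Chetaev-type function: near $(\exp(\pi\hat r_{d_i})R_d, \omega_d)$ the error function $\Psi_i$ is at a local maximum (value $2k_i$) while $\Psi_j$ ($j\ne i$) is at its minimum, so perturbing $R$ in the direction that decreases $\Psi_i$ produces a nonzero $e_r$ with a component that drives $\mathcal V$ (restricted suitably) to decrease away from the equilibrium value; instability then follows from Chetaev's theorem. The main obstacle I anticipate is the bookkeeping in part (ii): getting an explicit, self-consistent choice of the cross-term weight $c$ so that the quadratic form in $(\|e_\Omega\|,\|e_r\|)$ is genuinely negative definite while simultaneously the sublevel-set estimate \refeqn{eW0} with the stated constant $2/\lambda_M$ comes out exactly — this requires carefully tracking the constants $h_1,\dots,h_5$ from \refeqn{PsibSO} through the Young's-inequality splittings, and is where the ``semi-global'' qualifier (the $\psi < h_1$ restriction) enters.
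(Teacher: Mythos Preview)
Your overall architecture (energy plus cross term, Proposition~\ref{prop:errSO} for the kinematic bounds, Chetaev for instability) matches the paper's, but there is a genuine gap in your closed-loop computation. The feedforward terms in \refeqn{uSO} do \emph{not} produce the clean form $J\dot e_\Omega=-e_r-k_\Omega e_\Omega$. A direct calculation using $\dot e_\Omega=\dot\Omega+\hat\Omega R^T\omega_d-R^T\dot\omega_d$, the identity \refeqn{xAAx}, and $\Omega=e_\Omega+R^T\omega_d$ gives instead
\[
J\dot e_\Omega=\bigl\{Je_\Omega+(2J-\tr{J}I)R^T\omega_d\bigr\}^{\wedge}e_\Omega-e_r-k_\Omega e_\Omega.
\]
The residual gyroscopic term is skew in $e_\Omega$, so it disappears in $e_\Omega\cdot J\dot e_\Omega$ and your energy estimate $\dot{\mathcal U}=-k_\Omega\|e_\Omega\|^2$ for $\mathcal U=\tfrac{1}{2}e_\Omega\cdot Je_\Omega+\Psi$ is still correct. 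But in the cross-term derivative it does \emph{not} vanish: $e_r\cdot J\dot e_\Omega$ picks up a contribution bounded by $(\lambda_M\|e_\Omega\|+B)\|e_\Omega\|\,\|e_r\|$, where $B$ is a bound on $\|(2J-\tr{J}I)R^T\omega_d\|$ coming from the assumed boundedness of $\omega_d$. This $B$ then enters the condition on $c$ and the matrix you call the negative-definite quadratic form. Without tracking it, your bound on $\dot{\mathcal V}$ is simply wrong. (A related minor point: the paper takes the cross term as $cJe_\Omega\cdot e_r$, not $ce_r\cdot e_\Omega$, precisely so that $J\dot e_\Omega$ appears directly rather than $J^{-1}J\dot e_\Omega$.)

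Two smaller issues. First, your remark that ``$e_{r_1},e_{r_2}$ lie along $b_1,b_2$'' is false---$e_{r_i}=R^Tr_{d_i}\times b_i$ is \emph{orthogonal} to $b_i$---so the separation argument for part~(i) needs a different justification (the paper argues via the critical points of $\Psi$, which one can see from $\hat e_r=UGU^TR_d^TR-R^TR_dUGU^T$ with $G=\mathrm{diag}[k_1,k_2,0]$: vanishing forces $R^TR_d$ to commute with a matrix of distinct eigenvalues). Second, for the invariance step giving the constant $2/\lambda_M$ in \refeqn{eW0}, the paper does \emph{not} bootstrap with the cross-term Lyapunov $\mathcal V$; it uses the cross-term-free $\mathcal U$ directly, since $\dot{\mathcal U}\le 0$ globally and $\Psi\le\mathcal U$ immediately yields $\Psi(t)\le\mathcal U(0)\le\psi$ under \refeqn{Psi0SO}--\refeqn{eW0}. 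Your proposed bootstrap via $\mathcal V$ would not recover that exact constant. Finally, note that the paper deliberately avoids LaSalle (the system is time-varying through $R_d(t)$); the almost-global claim is obtained from the strict Lyapunov function together with part~(iii), not from an invariance principle with $c=0$.
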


\begin{proof}
Using \refeqn{eW}, \refeqn{uSO} and rearranging, the time-derivative of $Je_\Omega$ can be written as
\begin{align}
J\dot e_\Omega & = d\times e_\Omega - e_r-k_\Omega e_\Omega,\label{eqn:JeWdot}
\end{align}
where $d=Je_\Omega + (2J-\trs{J}I)R^T\omega_d\in\Re^3$.


The equilibrium corresponds to the critical points of $\Psi(R)$ where its derivatives become zero, i.e., $r_1=\pm r_{1_d}$ or $r_2=\pm r_{2_d}$, and $\omega=\omega_d$. For example, when $r_1=r_{1_d}$ and $r_2=r_{2_d}$, we have $R=R_d$. When $r_1=-r_{1_d}$ and $r_2=r_{2_d}$, the attitude is the $180^\circ$ rotation of $R_d$ about $r_{2_d}$, yielding $R=\exp(\pi r_{2_d}) R_d$. Other equilibria are obtained similarly, and these show (i).

Let a Lyapunov candidate function be
\begin{align*}
\mathcal{V} = \frac{1}{2} e_\Omega \cdot J e_\Omega + \Psi
 + c Je_\Omega\cdot e_r
\end{align*}
for a positive constant $c$. Using \refeqn{PsibSO}, we can show that
\begin{align}
z^T M_1 z \leq \mathcal{V},\label{eqn:Vlb}
\end{align}
where $z=[\|e_r\|,\|e_\Omega\|]^T\in\Re^2$ and $M_1\in\Re^{2\times 2}$ is given by
\begin{align}
M_1 = \frac{1}{2}\begin{bmatrix} \frac{2h_1}{h_2+h_3} & -c\lambda_M \\-c\lambda_M & \lambda_m\end{bmatrix}.\label{eqn:M1}
\end{align}

From \refeqn{JeWdot} and the property (i) of Proposition \ref{prop:errSO}, 
\begin{align*}
\dot{\mathcal{V}} 
& = - k_\Omega \|e_\Omega\|^2
+ cJe_\Omega\cdot \dot e_r
+cJ\dot e_\Omega \cdot e_r .
\end{align*}
We find the bound of the last two terms of the above equation. Using the property (ii) of Proposition \ref{prop:errSO}, we have 
\begin{align*}
 Je_\Omega \cdot \dot e_r \leq \lambda_M(k_1+k_2) \|e_\Omega\|^2.
\end{align*}
As the desired angular velocity is bounded by the assumption, there exists a constant $B>0$ satisfying
\begin{align*}
\|(2J-\tr{J}I)R^T\omega_d\| \leq 
\|(2J-\tr{J}I)\| \|\omega_d\| \leq 
B.
\end{align*}
From \refeqn{JeWdot} and using the fact that $\|e_{r}\|\leq k_1+k_2$, we have
\begin{align*}
&J\dot e_\Omega\cdot e_r \leq \lambda_M (k_1+k_2)\|e_\Omega\|^2 + (B+k_\Omega) \|e_\Omega\|\|e_r\| -\|e_r\|^2.
\end{align*}
From these, an upper bound of $\dot{\mathcal{V}}$ can be written as 
\begin{align}
\dot{\mathcal{V}} \leq -z^T M_3 z,\label{eqn:Vdot}
\end{align}
where the matrix $M_3\in\Re^{2\times 2}$ is given by
\begin{align}
M_3 = 
\begin{bmatrix}
c
& -\frac{c(B+k_\Omega)}{2}\\
-\frac{c(B+k_\Omega)}{2}
& k_\Omega-2c(k_1+k_2)\lambda_M
\end{bmatrix}.\label{eqn:M3}
\end{align}

If the constant $c$ is chosen sufficiently small such that
\begin{align}
c&<\min\Big\{\sqrt{\frac{2\lambda_m h_1}{\lambda_M^2h_{23}}},
 \frac{4 k_\Omega}{8k\lambda_M+(B+k_\Omega)^2}\Big\},\label{eqn:c}
\end{align}
where $h_{23}=h_2+h_3$, $k=k_1+k_2$, then the matrices $M_1,M_3$ are positive definite, which shows that the desired equilibrium is asymptotically stable, and $e_r,e_\Omega\rightarrow 0$ as $t\rightarrow\infty$.

However, the fact that $e_r\rightarrow 0$ does not necessarily imply that $R\rightarrow R_d$ as $t\rightarrow \infty$, since $e_r=0$ also at three undesired equilibria. Therefore, we cannot achieve global asymptotic stability for the given control system. Instead, we show almost global asymptotic stability as follow. At the first undesired equilibrium given by $R=\exp(\pi\hat r_{1_d})R_d$ and $e_\Omega=0$, we have $\mathcal{V}= 2 k_2$. Define
%
\begin{align*}
\mathcal{W} = 2k_2 - \mathcal{V}= -\frac{1}{2}e_\Omega\cdot J e_\Omega +(2k_2-\Psi) - c e_r\cdot e_\Omega.
\end{align*}
Then, $\mathcal{W}=0$ at the undesired equilibrium. We have
\begin{align*}
\mathcal{W} \geq -\frac{\lambda_M}{2}\|e_\Omega\|^2 +(2k_2-\Psi) - c \|e_r\|\| e_\Omega\|.
\end{align*}
Due to the continuity of $\Psi$, we can choose $R$ that is arbitrary close to $\exp(\pi\hat r_{1_d})R_d$ such that $(2k_2-\Psi)>0$. Therefore, if $\|e_\Omega\|$ is sufficiently small, we obtain $\mathcal{W}>0$ at those points. In other words, at any arbitrarily small neighborhood of the undesired equilibrium, there exists a domain in which $\mathcal{W} >0$, and we have $\dot{\mathcal{W}}= -\dot{\mathcal{V}} > 0$ from \refeqn{Vdot}. According to Theorem 4.3 in~\cite{Kha02}, the undesired equilibrium is unstable. The instability of the other two equilibrium configurations can be shown by the similar way. This shows (iii).

The region of attraction to the desired equilibrium excludes the stable manifolds to the undesired equilibria. But the dimension of the union of the stable manifolds to the unstable equilibria is less than the tangent bundle of $\SO$. Therefore, the measure of the stable manifolds to the unstable equilibrium is zero. Then, the desired equilibrium is \textit{almost} globally asymptotically stable~\cite{ChaSanICSM11}.

Next, we show exponential stability. Define $\mathcal{U} = \frac{1}{2} e_\Omega \cdot J e_\Omega + \Psi$. From \refeqn{JeWdot} and the property (i) of Proposition \ref{prop:errSO}, we have $\dot{\mathcal{U}} = - k_\Omega \|e_\Omega\|^2$, 
which implies that $\mathcal{U}(t)$ is non-increasing. For the initial conditions satisfying \refeqn{Psi0SO} and \refeqn{eW0}, we have $\mathcal{U}(0)\leq \psi$. Therefore, we obtain
\begin{align}
\Psi(R(t)) & \leq \mathcal{U}(t) \leq \mathcal{U}(0) \leq \psi < 2\min\{k_2,k_1 \}\label{eqn:Psitb}.
\end{align}
Thus, the upper bound of \refeqn{PsibSO} is satisfied. This yields
\begin{align}
\mathcal{V} \leq z^T M_2 z,\label{eqn:Vub}
\end{align}
where the matrix $M_2$ is given by
\begin{align}
M_2 = \frac{1}{2}\begin{bmatrix} \frac{2h_1h_4}{h_5(h_1-\psi)} & c\lambda_M \\c\lambda_M & \lambda_M\end{bmatrix}.\label{eqn:M2}
\end{align}
The condition on $c$ given by \refeqn{c} also guarantees that $M_2$ is positive definite. Therefore, from \refeqn{Vlb}, \refeqn{Vdot}, and \refeqn{Vub}, the desired equilibrium is exponentially stable~\cite{Kha02}. The initial attitudes $R(0)$ satisfying \refeqn{Psi0SO} almost cover $\SO$ as $k_1\rightarrow k_2$, excluding only three attitudes of undesired equilibria, and the initial angular velocities $\Omega(0)$ satisfying \refeqn{eW0}  cover $\Re^3$ as $k_1,k_2\rightarrow \infty$. In short, the set of initial conditions $(R(0),\Omega(0))$ that guarantee exponential stability almost cover $\SO\times\Re^3$ as $k_1\rightarrow\infty$ and $k_2\rightarrow k_1$. Therefore, the desired equilibrium is almost semi-globally exponentially stable. 
\end{proof}

Compared with other attitude control systems achieving almost global asymptotic stability for attitude stabilization of time-invariant systems on \SO, such as~\cite{ChaSanICSM11}, this proposition guarantees stronger almost semi-global \textit{exponential} stability for attitude \textit{tracking} of time-varying systems. 


The fact that the region of attraction does not cover the entire configuration manifold  is not a major issue in practice, as the probability that a given initial condition exactly lies in the stable manifolds to the unstable equilibria is zero, provided that the initial condition is randomly chosen. But, the existence of such stable manifolds may have strong effects on the dynamics of the controlled system~\cite{LeeLeoPICDC11}. In particular, the proportional term of the control input, namely $e_{r_i}$ approaches zero as the attitude becomes closer to one of the three undesired equilibria, thereby causing a slow convergence rate especially for large attitude errors. In the following subsection, discontinuities are introduced in the control input to achieve global exponential stability with improved convergence rates.


\subsection{Hybrid Control for Global Attitude Tracking}\label{sec:GES}

Recently, hybrid control systems for global attitude stabilization are developed in terms of quaternions~\cite{MaySanITAC11}, and rotation matrices~\cite{MayTeePACC11}, respectively. The key idea is switching between different forms of configuration error functions, referred to as synergistic potential functions, such that the attitude is expelled from the vicinity of undesired equilibria. The switching logic is defined with a hysteresis model to improve robustness with respect to measurement noises. This paper follows the same framework, but a new form of synergistic configuration error functions is provided to simplify controller structures and controller design procedure. The given control system also provides stronger global exponential stability that is uniformly applied to time-varying systems for tracking problems.

We first introduce a mathematical formulation of hybrid systems~\cite{GoeSanICSM09}. Let $\mathcal{M}$ be the set of discrete modes, and let $\mathcal{Q}$ be the domain of continuous states. Given a state $(\mb,\xi)\in\mathcal{M}\times \mathcal{Q}$, a hybrid system is defined by
\begin{alignat}{2}
\dot \xi & = \mathcal{F}(\mb,\xi),&\quad (\mb,\xi)&\in\mathcal{C},\label{eqn:Hyb1}\\
\mb^+ & = \mathcal{G}(\mb,\xi),& (\mb,\xi)&\in\mathcal{D},\label{eqn:Hyb2}
\end{alignat}
where the flow map $\mathcal{F}:\mathcal{M}\times\mathcal{Q}\rightarrow\T\mathcal{Q}$ describes the evolution of the continuous state $\xi$; the flow set $\mathcal{C}\subset\mathcal{M}\times\Re^n$ defines where the continuous state evolves; the jump map $\mathcal{G}:\mathcal{M}\times\mathcal{Q}\rightarrow\mathcal{M}$ governs the discrete dynamics; the jump set $\mathcal{D}\subset\mathcal{M}\times\mathcal{Q}$ defines where discrete jumps are permitted. 

For the proposed hybrid attitude control system, there are a nominal mode and two expelling modes. The control input at the nominal mode is equal to \refeqn{uSO} which is constructed by the following configuration error functions given at \refeqn{Psii}:
\begin{align}
\Psi_{N_i}(R)& =1-Rb_i\cdot r_{i_d},
\end{align}
where the subscript $N$ is used to explicitly denote that it is for the nominal mode, i.e., $\Psi_{N_i}\triangleq\Psi_i$. When the attitude becomes closer to undesired equilibria, the error function is switched to one of the following expelling error functions:
\begin{align}
\Psi_{E_1} (R) &= \alpha + \beta Rb_1\cdot (r_{1_d}\times r_{2_d}),\label{eqn:PsiE1}\\
\Psi_{E_2} (R) &= \alpha + \beta Rb_2\cdot (r_{1_d}\times r_{2_d}),\label{eqn:PsiE2}
\end{align}
for constant $\alpha,\beta$ satisfying $1<\alpha<2$ and $|\beta|< \alpha-1$.

For example, if the attitude becomes close to the critical point of the first nominal error function $\Psi_{N_1}$ where $Rb_1=-r_{1_d}$, the expelling configuration error $\Psi_{E_1}$ is engaged such that $Rb_1$ is steered toward a direction normal to $-r_{1_d}$, namely $-\frac{\beta}{|\beta|}(r_{1_d}\times r_{2_d})$, to rotate the rigid body away from the undesired critical point. Similarly, the second expelling configuration error function $\Psi_{E_2}$ is engaged near the critical points of $\Psi_{N_2}$. As a result, there are three discrete modes, namely $\mathcal{M}=\{\RI,\RII,\RIII\}$, and the configuration error function for each mode is given by
\begin{align}
\Psi_\RI(R) = k_1 \Psi_{N_1}(Rb_1) + k_2 \Psi_{N_2}(Rb_2),\label{eqn:PsiI}\\
\Psi_\RII(R) = k_1 \Psi_{N_1}(Rb_1) + k_2 \Psi_{E_2}(Rb_2),\label{eqn:PsiII}\\
\Psi_\RIII(R) = k_1 \Psi_{E_1}(Rb_1) + k_2 \Psi_{N_2}(Rb_2).\label{eqn:PsiIII}
\end{align}
In short, the nominal control input is constructed from the nominal error function $\Psi_\RI$. If the attitude is in the vicinity of the undesired critical points of $\Psi_{N_1}$ or $\Psi_{N_2}$, the control input is switched into the mode $\RIII$ or $\RII$, respectively.

The switching logic is formally specified as follows. Define a variable $\rho$ representing the minimum configuration error:
\begin{align}
\rho(R) = \min_{\mb\in\mathcal{M}} \{\Psi_\mb (R)\}.
\end{align}
Observing that the values of $\Psi_{N_1},\Psi_{N_2}$ are maximized at their undesired critical points, the jump map is chosen such that the discrete mode is switched to the new mode where the configuration error is minimum:
\begin{align}
\mathcal{G}(R) 
& = \argmin_{\mb\in\mathcal{M}} \{\Psi_\mb (R)\}
 = \{\mb\in\mathcal{M}\,:\, \Psi_\mb = \rho\}.\label{eqn:GSO}
\end{align}

It is possible to switch whenever a new  mode with a smaller value of configuration error function is available, or equivalently, when $\mathcal{G}(R)\neq \mb$ or $\Psi_\mb - \rho > 0$. However, the resulting controlled system may yield chattering due to measurement noise. Instead, a hysteresis gap $\delta$ is introduced for robustness, and a switching occurs if the difference between the current configuration error and the minimum value is greater than a prescribed hysteresis gap. More explicitly, the jump set and the flow set are given by
\begin{align}
\mathcal{D} & = \{ (R,\Omega,\mb) : \Psi_\mb - \rho \geq \delta \text{ and } \|e_\Omega\|\leq B_{e_\Omega}\},\label{eqn:DSO}\\
\mathcal{C} & = \{ (R,\Omega,\mb) : \Psi_\mb - \rho \leq \delta \text{ or } \|e_\Omega\|\geq B_{e_\Omega}\},\label{eqn:CSO}
\end{align}
for a positive constant $\delta$ that is specified later at \refeqn{deltaSO}, and an arbitrary positive constant $B_{e_\Omega}$. The condition on $e_\Omega$ is imposed to explicitly guarantee that the Lyapunov function used in the stability analysis strictly decreases over any jump. 

\begin{figure}
\setlength{\unitlength}{0.1\columnwidth}\scriptsize
\centerline{
\begin{picture}(5.5,5.3)(0,-0.5)
\put(0,0){\includegraphics[width=0.55\columnwidth]{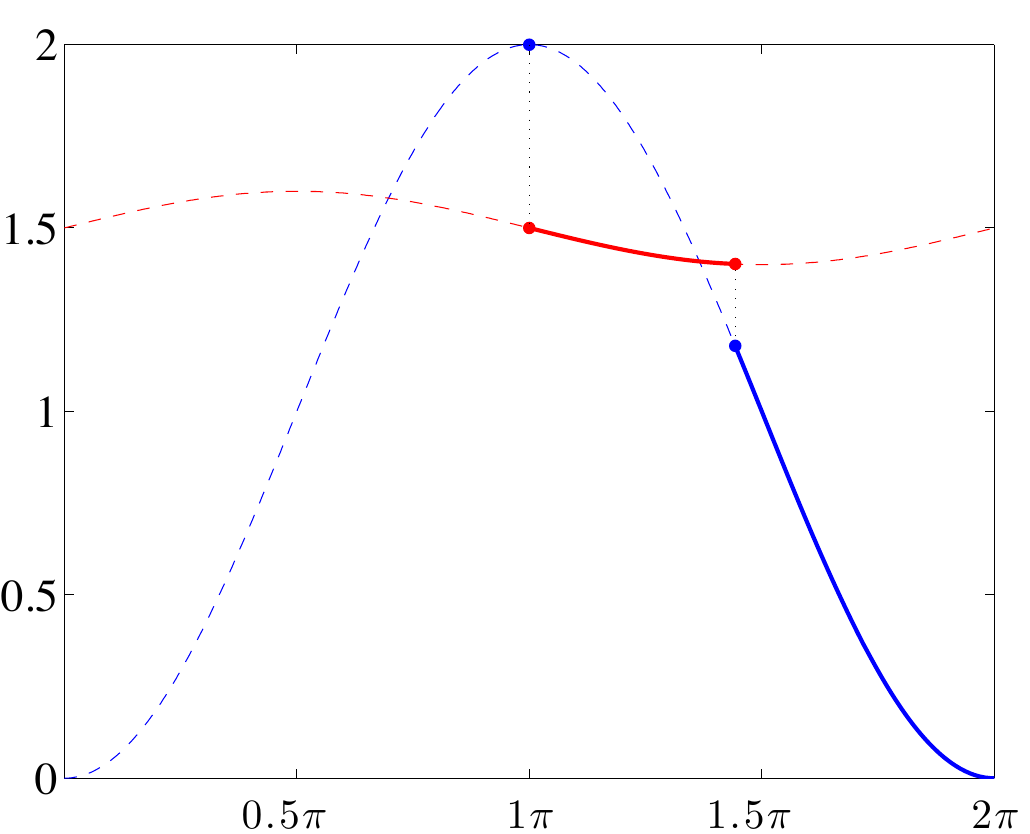}}
\put(1.0,-0.5){Angle between $r_1$ and $r_{1_d}$}
\put(-0.4,1.3){\rotatebox{90}{Error functions}}
\put(0.4,1.9){$k_1\Psi_{N_1}$}
\put(0.4,3.6){$k_1\Psi_{E_1}$}
\put(3.93,3.04){\vector(0,-1){0.43}}
\put(2.828,4.22){\vector(0,-1){0.43}}
\put(2.6,4.35){$(a)$}
\put(2.5,2.9){$(b)$}
\put(3.75,3.2){$(c)$}
\put(3.5,2.35){$(d)$}
\put(5.4,0.3){$(e)$}
\put(2.9,3.8){$\delta$}
\put(4.05,2.65){$\delta$}
\end{picture}}
\caption{Illustration of switching algorithm: Consider a trajectory starting from $(a)$ that is an undesired equilibrium of $k_1\Psi_{N_1}$. Since $k_1\Psi_{N_1}-k_1\Psi_{E_1} >\delta$, it is switched to $(b)$ and it moves right to reduce $k_1\Psi_{E_1}$. Once $k_1\Psi_{E_1}-k_1\Psi_{N_1}\geq\delta$ at $(c)$, it is switched to $(d)$ on $k_1\Psi_{N_1}$ and it moves right until the error becomes zero at $(e)$. In short, by switching to the expelling error function $k_1\Psi_{E_1}$ temporarily between (b) and (c), it avoids the undesired equilibrium $(a)$ of $k_1\Psi_{N_1}$.}
\end{figure}

The control input at each mode is constructed from the corresponding configuration error function by following the same procedure described in the previous section:
\begin{align}
u & = -e_H -k_\Omega e_\Omega  +(R^T\omega_d)^\wedge JR^T\omega_d
+ JR^T\dot\omega_d.\label{eqn:uHSO}
\end{align}
where the hybrid configuration error vectors are defined as
\begin{align}
e_H & = k_1 e_{H_1} + k_2 e_{H_2},\label{eqn:eH}\\
e_{H_1} & =
\begin{cases}
e_{r_1}  &\text{ if $\mb = \RI,\RII$},\\
-\beta R^T (r_{1_d}\times r_{2_d})\times b_1 &\text{ if $\mb = \RIII$},
\end{cases}\label{eqn:eH1}\\
e_{H_2} & =
\begin{cases}
e_{r_2} &\text{ if $\mb = \RI,\RIII$},\\
-\beta R^T (r_{1_d}\times r_{2_d})\times b_2 &\text{ if $\mb = \RII$}.
\end{cases}\label{eqn:eH2}
\end{align}

Exponential stability of hybrid systems evolving on $\Re^n$ has been introduced in~\cite{TeeForITAC13} by defining a distance between a set and a state in terms of the Euclidean norm. Generalizing the concept of exponential stability formally to arbitrary hybrid systems evolving on a nonlinear manifold is out of scope of this paper. Instead, we use the property of the proposed hybrid control system that the error variable $e_H$ may become zero only at the desired, nominal mode, i.e., only when $\mb=\RI$, and exponential stability is considered as imposing an exponential bound on the selected error variables of the continuous states as follows.



\begin{definition}\label{def:ES}
Let $a\in\mathcal{M}\times\mathcal{Q}$ be an equilibrium of \refeqn{Hyb1} and \refeqn{Hyb2}. Suppose $e:\mathcal{M}\times\mathcal{Q}\rightarrow \Re^q$ be an error variable satisfying $\|e\|=0$ at $a$, and $\|e\|\neq 0$ otherwise, where $q$ is the dimension of $\mathcal{Q}$. The equilibrium $a$ is \textit{globally exponentially stable with respect to $e$}, if it is globally asymptotically stable, and there exist $\lambda_0,\lambda_1>0$ such that $\|e(t)\|\leq \lambda_0 \|e(0)\| \exp(-\lambda_1 t)$ for any $e(0)$ and all $t\geq 0$.
\end{definition}

\begin{prop}\label{prop:GES}
Consider a hybrid control system defined by \refeqn{PsiI}-\refeqn{eH2}. For given constants $k_1,k_2,\alpha,\beta$ satisfying $k_1,k_2>0$, $k_1\neq k_2$, $1<\alpha<2$ and $|\beta|< \alpha-1$, choose the hysteresis gap $\delta$ such that
\begin{align}
0<\delta < \min\{k_1,k_2\}\min\{2-\alpha, \alpha-|\beta|-1\}.\label{eqn:deltaSO}
\end{align}
Then, the desired equilibrium $(R_d,\omega_d)$ is globally exponentially stable with respect to $z=[\|e_H\|,\, \|e_\Omega\|]\in\Re^2$.
\end{prop}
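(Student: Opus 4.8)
The plan is to reduce the analysis to Proposition~\ref{prop:AGASSO}. First I would check that inside each discrete mode the closed loop is governed by exactly the same error dynamics as in the smooth case. A computation parallel to the one leading to \refeqn{JeWdot} should show that the hybrid error $e_H$ in \refeqn{eH}--\refeqn{eH2} is precisely the derivative of $\Psi_{\mathbf m}$ along $\delta R=R\hat\eta$, so that $\dot\Psi_{\mathbf m}=e_H\cdot e_\Omega$, $\|\dot e_H\|\leq(k_1+k_2)\|e_\Omega\|$ and $\|e_H\|\leq k_1+k_2$ (here $|\beta|<\alpha-1<1$ is used for the expelling terms), and hence that $J\dot e_\Omega=\{Je_\Omega+(2J-\tr{J}I)R^T\omega_d\}^\wedge e_\Omega-e_H-k_\Omega e_\Omega$ along flows. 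Proposition~\ref{prop:AGASSO} then applies per mode: $\mathcal U_{\mathbf m}\triangleq\tfrac12 e_\Omega\cdot Je_\Omega+\Psi_{\mathbf m}$ obeys $\dot{\mathcal U}_{\mathbf m}=-k_\Omega\|e_\Omega\|^2\leq0$, and the augmented $\mathcal V_{\mathbf m}\triangleq\mathcal U_{\mathbf m}+cJe_\Omega\cdot e_H$ obeys $\dot{\mathcal V}_{\mathbf m}\leq-z^TM_3z$ with $z=[\,\|e_H\|,\,\|e_\Omega\|\,]^T$, the same matrix $M_3$ and the same admissible $c$ as in \refeqn{c}.

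The key step would be to show that the family $\{\Psi_I,\Psi_{II},\Psi_{III}\}$ is synergistic with gap larger than $\delta$: at every critical point of every $\Psi_{\mathbf m}$ other than $R_d$ (which is a critical point of $\Psi_I$ alone, with value $0$), some other mode has a strictly smaller value, by at least $\min\{k_1,k_2\}\min\{2-\alpha,\,\alpha-|\beta|-1\}$, which exceeds $\delta$ by \refeqn{deltaSO}. For $\Psi_I$ this is immediate: the critical points are $Rb_i=\pm r_{d_i}$, and since $r_{d_1}\perp r_{d_1}\times r_{d_2}$ one finds $\Psi_{E_i}=\alpha$ at the undesired ones, giving $\Psi_I-\Psi_{III}=k_1(2-\alpha)$ or $\Psi_I-\Psi_{II}=k_2(2-\alpha)$. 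For $\Psi_{II}$ (and symmetrically $\Psi_{III}$) I would solve $e_H=0$: writing $Q=R^TR_d\in\SO$, the condition reduces to two entries of $Q$ vanishing together with one linear relation between two others, which forces $Rb_1\in\{\pm r_{d_1}\}$ and $Rb_2\in\{\pm(r_{d_1}\times r_{d_2})\}$, with whole circles of critical points appearing in the degenerate cases $k_1\pm k_2\beta=0$; evaluating $\Psi_{II},\Psi_I,\Psi_{III}$ at these points and invoking $1<\alpha<2$ and $|\beta|<\alpha-1$ produces the gap in every case. I would also record $\Psi_{E_i}\geq\alpha-|\beta|>1$, so $\Psi_{II},\Psi_{III}\geq\min\{k_1,k_2\}(\alpha-|\beta|)$ everywhere, which means the nominal mode uniquely attains $\rho$ whenever $\rho$ is small.

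With synergism in hand I would assemble the hybrid argument. Since $(R,\Omega)$ does not change over jumps while $\Psi_{\mathbf m}$ drops to $\rho\leq\Psi_{\mathbf m}-\delta$, the nonnegative quantity $\mathcal U_{\mathbf m}$ decreases by at least $\delta$ at each jump and is non-increasing along flows, so it stays below its initial value and only finitely many jumps occur; after a finite time the solution flows in one fixed mode $\mathbf m^*$. Along that final flow $\mathcal U_{\mathbf m^*}$ converges, so by a LaSalle/Barbalat argument the solution approaches the critical set of $\Psi_{\mathbf m^*}$; this set consists entirely of undesired configurations unless $\mathbf m^*=I$, and even then the three undesired critical points of $\Psi_I$ lie in the jump set $\mathcal D$ by synergism. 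Approaching any undesired critical point would force a further jump, a contradiction, so $\mathbf m^*=I$ and $(R,\Omega)\to(R_d,\omega_d)$, hence $\mathcal U_I(t)\to0$. In particular the hypotheses \refeqn{Psi0SO}--\refeqn{eW0} of Proposition~\ref{prop:AGASSO} hold at some finite time $T$ for a fixed $\psi<2\min\{k_1,k_2\}$, and applying that proposition from $t=T$ yields exponential convergence of $(R(t),\Omega(t))$ to $(R_d,\omega_d)$; since $\mathcal U$ bounds the state uniformly in terms of the initial error on $[0,T]$ and only finitely many jumps take place, this should extend to $t\geq0$.

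I expect the synergism verification to be the main obstacle: computing the critical points of $\Psi_{II}$ and $\Psi_{III}$ cleanly, handling the degenerate cases $k_1\pm k_2\beta=0$ where circles of critical points appear, and pinning down the exact dependence of the gap on $\alpha,\beta,k_1,k_2$ so that \refeqn{deltaSO} is precisely the right sufficient condition. A secondary subtlety is making the final exponential estimate genuinely uniform over the transient preceding the last jump.
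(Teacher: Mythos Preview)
Your outline is essentially sound and the synergism verification you plan is the same one the paper performs (indeed the paper simply lists the twelve critical points and checks two of them in detail, leaving the remaining nine ``by the same argument''; your more careful discussion of the degenerate configurations $k_1\pm k_2\beta=0$ goes beyond what the paper does). Where your route diverges from the paper is in how exponential stability is extracted. The paper does \emph{not} go through a two-phase argument (finitely many jumps, then Barbalat, then eventually invoke Proposition~\ref{prop:AGASSO}). Instead it argues directly with the single Lyapunov function $\mathcal V_{\mathbf m}=\tfrac12 e_\Omega\cdot Je_\Omega+\Psi_{\mathbf m}+cJe_\Omega\cdot e_H$: since the undesired critical points of every $\Psi_{\mathbf m}$ lie in the jump set, $e_H$ never vanishes in the flow set except at $R=R_d$, and by compactness of the flow set (in the $R$-variable) one obtains a global upper bound $\Psi_{\mathbf m}\leq\gamma\|e_H\|^2$ there. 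This replaces the local bound \refeqn{PsibSO} and gives a genuine quadratic sandwich for $\mathcal V_{\mathbf m}$ \emph{throughout} the flow set, so $\dot{\mathcal V}_{\mathbf m}\leq -z^TM_3z$ already yields a uniform exponential decrease along flows; over jumps $\mathcal V_{\mathbf m}$ drops strictly (for $c$ small enough, since $e_\Omega$ is unchanged and $\Psi$ drops by at least $\delta$), and global exponential stability follows directly.

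The payoff of the paper's route is exactly the point you flagged as a ``secondary subtlety'': your argument gives global asymptotic stability plus eventual exponential convergence, but making the estimate uniform requires bounding, in terms of the initial error, the time $T$ at which $\Psi_I$ first falls below $\psi$; Barbalat alone does not furnish such a bound. The compactness trick sidesteps this entirely by making the exponential decrease hold from $t=0$. If you want to keep your two-phase structure, the missing ingredient is precisely this global quadratic upper bound on $\Psi_{\mathbf m}$ in the flow set; once you have it, the detour through Barbalat becomes unnecessary.
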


\begin{proof}
The set of values for $(r_1,r_2)=(Rb_1,Rb_2)$ at the critical points of each configuration error function is given by $\mathcal{R}_I = \{(r_1,r_2)\,|\, (\pm r_{1_d},\pm r_{2_d})\}$, 
$\mathcal{R}_{II} = \{(r_1,r_2)\,|\, (\pm r_{1_d},\pm r_{1_d}\times r_{2_d})\}$, 
$\mathcal{R}_{III} = \{(r_1,r_2)\,|\, (\pm r_{1_d}\times r_{2_d},\pm r_{2_d})\}$.
Therefore, there are twelve critical points in total, including the desired equilibrium $(r_{1_d},r_{2_d})$, and eleven undesired critical points. 

We first show that the undesired critical points cannot become an equilibrium of the controlled system as they belong to the jump set $\mathcal{D}$. At the first undesired critical point of $\Psi_\RI$, namely $(r_1,r_2)=(r_{1_d},-r_{2_d})$, we have
\begin{align*}
\Psi_\RI	=	2k_2,\quad 
\Psi_{\RII} = \alpha k_2,\quad
\Psi_{\RIII} = \alpha k_1+2k_2.
\end{align*}
which gives $\rho=\min_{\mb}\Psi_{\mb}=\alpha k_2$ as $\alpha <2$. This yields $\Psi_\RI-\rho=(2-\alpha) k_2 \geq \delta$ from the definition of $\delta$ given at \refeqn{deltaSO}. Therefore, the first critical point corresponding to $(r_1,r_2)=(r_{1_d},-r_{2_d})$ with $e_\Omega=0$ lies in the jump set $\mathcal{D}$. 
This can be repeated to show that all of the undesired critical points of the configuration error functions belong to the jump set. Thus, the desired equilibrium is the only equilibrium of the controlled system.

The remaining part of the proof is similar to the proof of Proposition \ref{prop:AGASSO}. For the nominal mode $\mb=\RI$, all of properties at Proposition \ref{prop:errSO} are automatically satisfied as the definitions of the configuration error function and error vectors are identical. Furthermore, since the flow set $\mathcal{C}$ excludes undesired critical points where $e_r=0$, there exists a constant $\gamma>0$ such that
\begin{align}
\Psi \leq \gamma \|e_r\|^2,\label{eqn:PsiBgam}
\end{align}
for any $R,R_d$ in the flow set $\mathcal{C}$. We can show the same properties for $\Psi_{\RII}$ and $\Psi_{\RIII}$. 


Define a Lyapunov function on $\mathcal{M}\times (\SO\times\Re^3)$ as
\begin{align*}
\mathcal{V}_\mb = \frac{1}{2} e_\Omega \cdot J e_\Omega + \Psi_\mb
 + c e_\Omega\cdot e_H.
\end{align*}
This is positive definite about the desired equilibrium at $\mb=\RI$. According to the above properties and the proof of Proposition \ref{prop:AGASSO}, $\mathcal{V}_\mb$ is positive definite and decrescent with respect to quadratic functions of $e_H$ and $e_\Omega$, and $\dot{\mathcal{V}}_\mb$ is less than a negative quadratic function of of $e_H$ and $e_\Omega$ in the flow set $\mathcal{C}$. Therefore, the error variables $e_H$ and $e_\Omega$ exponentially decrease in the flow set $\mathcal{C}$.

Note that the desired angular velocity $\omega_d$, therefore $e_\Omega$ does not change over any jump, since $\frac{d}{dt} (r_{1_d}\times r_{2_d}) = \omega_d\times (r_{1_d}\times r_{2_d})$. 
Therefore, the change of the Lyapunov function over the jump from a mode $\mb\in\mathcal{M}$ is
\begin{align*}
\mathcal{V}_\mathcal{G}-\mathcal{V}_\mb=\rho-\Psi_{\mb} + ce_\Omega\cdot e_H\big|^{\mathcal{G}}_\mb\leq -\delta + 2ck B_{e_\Omega},
\end{align*}
where we use the fact that $\|e_H\|\leq k_1+k_2 \triangleq k$ and \refeqn{DSO}. If the constant $c$ that is independent of the controller is chosen sufficiently small such that $c< \frac{\delta}{4ck B_{e_\Omega}}$, we have $\mathcal{V}_\mathcal{G}-\mathcal{V}_\mb< -\frac{\delta}{2}$, i.e., the Lyapunov function strictly decreases over any jump. It follows that the desired equilibrium is globally exponentially stable with respect to $z=[\|e_H\|,\, \|e_\Omega\|]$.
\end{proof}

The unique feature of the proposition is that it provides a stronger global exponential stability for a tracking problem on $\SO$, compared with the existing results in~\cite{MayTeePACC11} yielding global asymptotic stability. Another interesting feature is that the construction of the expelling configuration error functions are simpler as it is constructed on the unit-sphere. 

In~\cite{MayTeeA13}, a synergistic family of potential functions is constructed for the reduced attitude tracking of spherical directions through a parameterized deffeomorphism of the nominal configuration error function presented in this paper. 
Similarly in~\cite{MayTeePACC11}, an expelling configuration error function is constructed for the full attitude tracking by \textit{angular warping}, where the nominal configuration error function is composed with a diffeomorphism that represents stretched rotations. The resulting control system design involves nontrivial derivatives and it is relatively difficult to compute the required hysteresis gap $\delta$, that is required to implement the given hybrid controller.

In this paper, the construction of expelling configuration error function at \refeqn{PsiE1}, \refeqn{PsiE2} is intuitive and straightforward as they are based on the comparison between the modified desired directions and the actual directions, rather than composing the nominal configuration error function with a diffeomorphism as~\cite{MayTeePACC11,MayTeeA13}. As a result, a range of the hysteresis gap to guarantee stability is explicitly given by \refeqn{deltaSO}, which can be easily checked by given controller gains.  In short, the presented control system provide a stronger global exponential stability, with simpler controller design procedure of choosing a hystereses gap $\delta$. For example, at \refeqn{deltaSO}, the upper bound of $\delta$ is maximized along the line of $2\alpha-|\beta|-3=0$ to yield $0<\delta<2-\alpha$. If $\alpha=1.6$, then we can simply choose $\beta=0.2$ to obtain $0<\delta<0.4 \min\{k_1,k_2\}$.


\section{Robust Attitude Tracking with Disturbance}\label{sec:RAT}

In this section, we consider a case where there exists unknown, but fixed disturbance $\Delta$. Both of smooth and hybrid attitude control systems are constructed to achieve exponential stability in the presence of the disturbance via an integral control. 

\subsection{Almost Global Robust Attitude Tracking}\label{sec:RAG}

First, a smooth attitude control scheme is proposed in this subsection. It is based on constructing an estimate of the disturbance, denoted by $\bar\Delta\in\Re^3$ according to the following differential equation,
\begin{align}
\dot{\bar\Delta} &= \frac{k_\Delta}{2} \braces{e_\Omega + \parenth{c+\frac{1}{k_\Omega}}e_r},\label{eqn:bD_dot}
\end{align}
for positive constants $c,k_\Delta$. This is designed to achieve exponential convergence of the estimation error defined as $e_\Delta = \Delta-\bar\Delta\in\Re^3$, as well as the attitude tracking errors. 



\begin{prop}\label{prop:RAGAS}
Consider the dynamic system \refeqn{Wdot}, \refeqn{Rdot}. A desired trajectory is given by \refeqn{rddot}. For $k_1,k_2,k_\Omega,k_\Delta, c >0$ with $k_1\neq k_2$, a control input is chosen as
\begin{align}
u & = -e_r -k_\Omega e_\Omega +\Omega\times J\Omega -J(\hat\Omega R^T \omega_d-R^T \dot\omega_d)\nonumber\\
&\quad -\bar\Delta,
\label{eqn:uA}
\end{align}
where the estimate $\bar\Delta$ is constructed by \refeqn{bD_dot}. Then, there exists controller parameters such that the following properties hold:
\renewcommand{\labelenumi}{(\roman{enumi})}
\begin{enumerate}
\item There are four equilibrium configurations for $(R,\omega)$, given by the property (i) of Proposition \ref{prop:AGASSO}.
\item The desired equilibrium $(R_d,\omega_d)$ with $\bar\Delta=\Delta$ is almost globally asymptotically stable, and locally exponentially stable.
\item The three undesired equilibria are unstable.
\end{enumerate}
\end{prop}

\begin{proof}
From \refeqn{eW} and \refeqn{uA}, the error dynamics for $e_\Omega$ is given by
\begin{align}
J\dot e_\Omega & = -e_r -k_\Omega e_\Omega + e_\Delta.\label{eqn:JeWdot1}
\end{align}
Equilibria of the controlled system corresponds to the configurations where $e_r=0$, $e_\Omega=0$, and $e_\Delta=0$. From the proof of Proposition \ref{prop:AGASSO}, this shows (i).

Define an augmented angular velocity error vector $\bar e_\Omega\in\Re^3$ as
\begin{align}
\bar e_\Omega = e_\Omega -\frac{1}{2k_\Omega}e_\Delta,\label{eqn:beW}
\end{align}
which is introduced to show exponential convergence for all of the tracking errors and the estimation error. From \refeqn{JeWdot1}, \refeqn{beW}, and using the fact that $\dot e_\Delta=-\dot{\bar\Delta}$, the time-derivative of the augmented angular velocity error is given by
\begin{align}
J\dot{\bar e}_\Omega 
& = -e_r -k_\Omega \bar e_\Omega + \frac{1}{2} e_\Delta
+\frac{1}{2k_\Omega}J\dot{\bar \Delta}.\label{eqn:JbeWdot1}
\end{align}
Similarly, we have $\dot\Psi = e_r \cdot e_\Omega = e_r \cdot \bar e_\Omega + \frac{1}{2k_\Omega} e_r\cdot e_\Delta$. 

 
Define a Lyapunov function:
\begin{align*}
\bar{\mathcal{V}}= \frac{1}{2} \bar e_\Omega\cdot J \bar e_\Omega + \Psi + cJ\bar e_\Omega\cdot e_r + \frac{1}{2k_\Delta} e_\Delta\cdot e_\Delta.
\end{align*}
From \refeqn{PsibSO}, it is bounded by
\begin{align}
\bar z^T \bar M_1 \bar z 
\leq \bar{\mathcal{V}}
\leq \bar z^T \bar M_2 \bar z ,\label{eqn:VbI}
\end{align}
where $\bar z=[\|e_r\|,\|e_\Omega\|,\|e_\Delta\|]^T\in\Re^3$ and the matrices $\bar M_1,\bar M_2\in\Re^{3\times 3}$ are given by $\bar M_1 = \mathrm{diag}(M_1,\frac{1}{2k_\Delta})$ and $\bar M_2=\mathrm{diag}(M_2,\frac{1}{2 k_\Delta})$. The submatrices $M_1$ and $M_2$ are given at \refeqn{M1} and \refeqn{M2}. 
From \refeqn{JbeWdot1}, we have 
\begin{align*}
\dot{\bar{\mathcal{V}}} 
& = (\bar e_\Omega + c e_r)\cdot\{-e_r -k_\Omega \bar e_\Omega + \frac{1}{2} e_\Delta
+\frac{1}{2k_\Omega}J\dot{\bar \Delta}\}\\
&\quad + e_r \cdot \bar e_\Omega + \frac{1}{2k_\Omega} e_r\cdot e_\Delta
+ c \dot e_r \cdot J\bar e_\Omega 
- \frac{1}{k_\Delta} e_\Delta \cdot \dot{\bar \Delta}.
\end{align*}
The above expression is simplified as follows. First, the terms that are explicitly linear with respect to $e_\Delta$ can be rearranged by \refeqn{bD_dot} as
\begin{align*}
e_\Delta & \cdot \{\frac{1}{2}(\bar e_\Omega + c e_r) +\frac{1}{2k_\Omega} e_r - \frac{1}{k_\Delta} \dot{\bar\Delta} \}
 = e_\Delta \cdot \frac{1}{2} (\bar e_\Omega - e_\Omega)\\
& = -\frac{1}{4k_\Omega} \|e_\Delta\|^2.
\end{align*}
Second, from the property (ii) of Proposition \ref{prop:errSO}, we have 
\begin{align*}
 cJ\bar e_\Omega \cdot \dot e_r & \leq c\lambda_M(k_{1}+k_2) \|\bar e_\Omega\|\|e_\Omega\|\\
& \leq c\lambda_M (k_{1}+k_2)\parenth{\|\bar e_\Omega\|^2 + \frac{1}{2k_\Omega}\|\bar e_\Omega\|\|e_\Delta\|}.
\end{align*}
Next, from \refeqn{bD_dot} and \refeqn{beW},
\begin{align*}
\frac{1}{2k_\Omega}(\bar e_\Omega & + ce_r)\cdot J\dot{\bar \Delta} \\
&=(\bar e_\Omega + ce_r)\cdot \frac{k_\Delta J}{4k_\Omega} \braces{\bar e_\Omega +\frac{1}{2k_\Omega}e_\Delta+ \parenth{c+\frac{1}{k_\Omega}}e_r}\\
&\leq \frac{k_\Delta \lambda_M}{4k_\Omega} \{\|\bar e_\Omega\|^2 + \parenth{c^2+\frac{c}{k_\Omega}}\|e_r\|^2\\
&\quad +\parenth{2c+\frac{1}{k_\Omega}}\|\bar e_\Omega\|\|e_r\|
 + \frac{1}{2k_\Omega}(\|\bar e_\Omega\|+c\|e_r\|)\|e_\Delta\|\}.
\end{align*}
Using these, an upper bound of the time-derivative of the Lyapunov function can be written as
\begin{align*}
\dot{\bar{\mathcal{V}}} & \leq -\bar z^T \bar M_3 \bar z,
\end{align*}
where the matrix $\bar M_{3}\in\Re^{3\times 3}$ is given at \refeqn{bM3}, and the unspecified parts of \refeqn{bM3} is chosen such that $\bar M_3=\bar M_3^T$. 
\begin{figure*}
\begin{align}
\bar M_3 = \begin{bmatrix}
c(1-\frac{k_\Delta\lambda_M}{4k_\Omega}(c+\frac{1}{k_\Omega})) 
& \frac{ck_\Omega}{2}-\frac{k_\Delta\lambda_M}{8k_\Omega}(2c+\frac{1}{k_\Omega}) & 
-\frac{ck_\Delta\lambda_M}{16k_\Omega^2}\\
 \cdot & k_\Omega-\lambda_M(ck+\frac{k_\Delta}{4k_\Omega}) 
   & -\frac{c\lambda_M k}{4k_\Omega}-\frac{k_\Delta\lambda_M}{16k_\Omega^2}\\
\cdot &  \cdot       & \frac{1}{4k_\Omega}
\end{bmatrix}.\label{eqn:bM3}
\end{align}
\end{figure*}
There exist the values of controller parameters such that the matrix $\bar M_3$ becomes positive define. For example, when $c=\epsilon$, $\lambda_M k_\Delta=\epsilon$, $\lambda_M (k_1+k_2) = \epsilon$ and $k_\Omega=\frac{1}{\epsilon}$ for a  constant $\epsilon$, we can show that $\bar M_3$ is positive definite when $0<\epsilon <0.85$ from the Matlab symbolic computational tool. This implies that the desired equilibrium is asymptotically stable. The instability of the undesired equilibria can be shown by following the same approach given at the proof of Proposition \ref{prop:AGASSO}. These show almost global asymptotic stability. 

For exponential stability, the upper bound of \refeqn{VbI} should be satisfied, or $\Psi< h_1$ from Proposition \ref{prop:errSO}. Unlike the proof of Proposition \ref{prop:AGASSO}, we do not have a sufficient condition on the initial conditions for the bound. As such, we can only guarantee local exponential stability.
\end{proof}

The estimation law presented at \refeqn{bD_dot} can be interpreted as an integral control. The first term $e_\Omega$ at the right hand side of \refeqn{bD_dot} has an effect of increasing the proportional gain of the control system, as the time-derivative of the error vector, namely $\dot e_r$ is linear with respect to the angular velocity error $e_\Omega$. Effectively, the proportional gain of the control input is given by $k_1,k_2$ multiplied by $1+\frac{k_\Delta}{2}$, and the integral gain of the control input is given by $\frac{k_\Delta}{2}(c+\frac{1}{k_\Omega})$. 

Nonlinear PID-like controllers have been developed for attitude stabilization in terms of modified Rodriguez parameters~\cite{SubJAS04} and quaternions~\cite{SubAkeJGCD04}, and for attitude tracking in terms of Euler-angles~\cite{ShoJuaPACC02}. The proposed control system is developed on $\SO$, therefore it avoids singularities of Euler-angles and Rodriguez parameters, as well as unwinding of quaternions. It also provides almost global asymptotic stability for attitude \textit{tracking} problems with fixed uncertainties.

One of the unique feature of the presented control system is that it guarantees exponential convergence of the estimation error $e_\Delta$, as well as the tracking errors $e_r$, $e_\Omega$. This is in contrast to most of other  indirect adaptive control approaches where there is no guarantee on the convergence rate of the parameter estimation error. 



\subsection{Global Robust Attitude Tracking}\label{sec:RGES}

The preceding attitude control system is further developed into a hybrid control system to achieve global exponential stability in the presence of the disturbance. The estimation of the disturbance is redefined in terms of the hybrid error vector given at \refeqn{eH} as
\begin{align}
\dot{\bar\Delta}_H &= \frac{k_\Delta}{2} \braces{e_\Omega + \parenth{c+\frac{1}{k_\Omega}}e_H},\label{eqn:bDH_dot}
\end{align}
where $\bar\Delta_H\in\Re^3$ denotes an estimate of the disturbance. The jump set and the flow set are revised as
\begin{align}
\bar{\mathcal{D}} & = \{ (R,\Omega,\mb) : \Psi_\mb - \rho \geq \delta\text{ and } E(e_\Omega,\bar\Delta_H) \leq \frac{\delta}{4}\},\label{eqn:DI}\\
\bar{\mathcal{C}} & = \{ (R,\Omega,\mb) : \Psi_\mb - \rho \leq \delta\text{ or } E(e_\Omega,\bar\Delta_H) \geq \frac{\delta}{4}\},\label{eqn:CI}
\end{align}
where $E(e_\Omega,\bar\Delta_H)\in\Re$ is a scalar function of $e_\Omega,\bar\Delta_H$ defined as
\begin{align}
E(e_\Omega,\bar\Delta_H)=2ck \|e_\Omega\| + \frac{ck}{k_\Omega}\|\bar\Delta_H\|.
\label{eqn:E}
\end{align}

The control input is chosen as
\begin{align}
u & = -e_H -k_\Omega e_\Omega +\Omega\times J\Omega -J(\hat\Omega R^T \omega_d-R^T \dot\omega_d)\nonumber\\
&\quad -\bar\Delta_H.\label{eqn:uRGES}
\end{align}
The other parts of the hybrid control system, such as the jump map are identical to Section \ref{sec:GES}.

\begin{prop}\label{prop:RGES}
Consider a hybrid control system defined by \refeqn{PsiI}-\refeqn{GSO}, \refeqn{eH}-\refeqn{eH2}, and \refeqn{bDH_dot}-\refeqn{CI}. For given constants $k_1,k_2,\alpha,\beta$ satisfying $k_1,k_2>0$, $k_1\neq k_2$, $1<\alpha<2$ and $|\beta|< \alpha-1$, choose the hysteresis gap $\delta$ such that \refeqn{deltaSO} is satisfied. Assume that the bound of the disturbance given at \refeqn{Bd} satisfies $B_\Delta < \frac{\delta k_\Omega}{4ck}$. Then, the desired equilibrium $(R_d,\omega_d)$ is globally exponentially stable with respect to $\bar z=[\|e_H\|,\, \|e_\Omega\|,\, \|e_\Delta\|]\in\Re^3$.
\end{prop}

\begin{proof}
As shown at the proof of Proposition \ref{prop:GES}, all of the undesired critical points of the configuration error functions lie in the jump set $\bar{\mathcal{D}}$, and Proposition \ref{prop:errSO} with \refeqn{PsiBgam} is satisfied in the flow set $\bar{\mathcal{C}}$. Define a Lyapunov function as 
\begin{align*}
\bar{\mathcal{V}}_\mb = \frac{1}{2} \bar e_\Omega \cdot J \bar e_\Omega + \Psi_\mb
 + c \bar e_\Omega\cdot e_H + \frac{1}{2k_\Delta}e_\Delta\cdot e_\Delta,
\end{align*}
where $e_\Delta = \Delta -\bar\Delta_H$. From the proof of Proposition \ref{prop:RAGAS}, $\dot{\bar{\mathcal{V}}}_\mb$ is negative definite with respect to a negative quadratic function of $e_H$, $\bar e_\Omega$, and $e_\Delta$, and therefore all of the error variables exponentially decrease in the flow set $\bar{\mathcal{C}}$.

Since $\bar e_\Omega, e_\Delta$ are not changed over any jump, the change of the Lyapunov function over the jump from a mode $\mb\in\mathcal{M}$ is
\begin{align*}
\bar{ \mathcal{V}}_\mathcal{G}-\bar{\mathcal{V}}_\mb&=\rho-\Psi_{\mb} + c\bar e_\Omega\cdot e_H\big|^{\mathcal{G}}_\mb\\
& \leq -\delta + 2ck\braces{\|e_\Omega\|+\frac{1}{2k_\Omega}(B_\Delta + \|\bar\Delta_H\|)}\end{align*}
where we use the fact that $\|e_H\|\leq k$. From the definition of the jump set given at \refeqn{DI}, and using the assumption implying $\frac{ck}{k_\Omega}B_\Delta < \frac{\delta}{4}$, we have $\bar{\mathcal{V}}_\mathcal{G}-\bar{\mathcal{V}}_\mb\leq -\frac{\delta}{2} < 0$, which implies that the Lyapunov function strictly decreases in the jump set $\bar{\mathcal{D}}$. Therefore, the desired equilibrium is global exponentially stable.
\end{proof}

Global asymptotic stability is achieved for an attitude control system with an integral term in terms of quaternions, based on LaSalle's principle~\cite{SuCaiJGCD11}. The proposed control system guarantees a stronger exponential stability of all of the tracking errors and the estimation errors in the presence of the disturbance.


\section{Numerical Examples}\label{sec:NS}

Consider a rigid body whose inertia matrix is given by $J = 0.1\times \mathrm{diag}[3,2,1]\,\mathrm{kgm^2}$. The desired attitude command is specified as $R_d(t) =\exp(\psi(t)\hat e_3)\exp(\theta(t)\hat e_2)\exp(\phi(t)\hat e_1)$ in terms of $3\text{-}2\text{-}1$ Euler-angles, where $\phi(t)=\sin 0.5t$, $\theta(t)=0.1(-1+t)$, $\psi(t)=1-\cos t$. The controller parameters are chosen as $b_1=[1,0,0]^T$, $b_2=[0,1,0]^T$, $\alpha=1.9$, $\beta= 0.8$, $\delta=0.39$, $k_1=4$, $k_2=4.1$, $k_\Omega=2.8$, $k_I=2$, and $c=0.1$. The following three cases are considered.

\begin{figure}
\centerline{
	\subfigure[Attitude tracking error $\|R-R_d\|$]{
		\includegraphics[width=0.48\columnwidth]{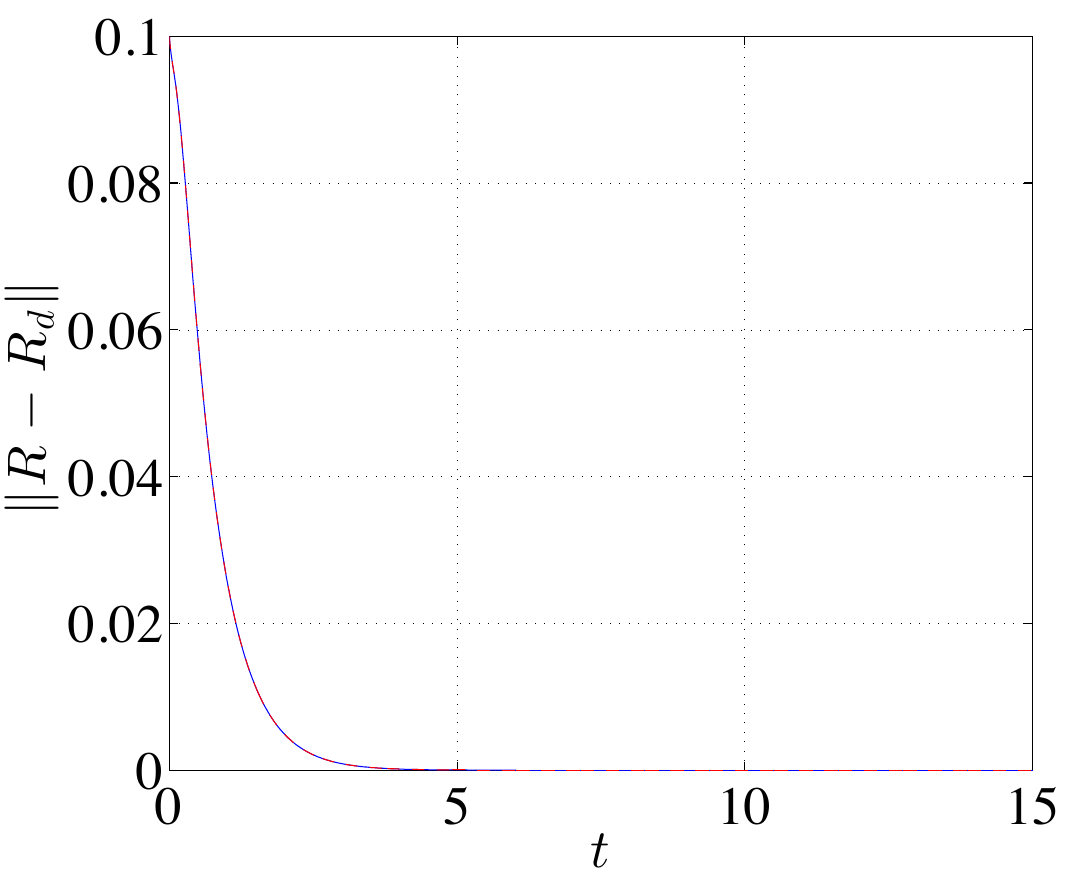}}
	\subfigure[Attitude error vector $e_H$]{
		\includegraphics[width=0.52\columnwidth]{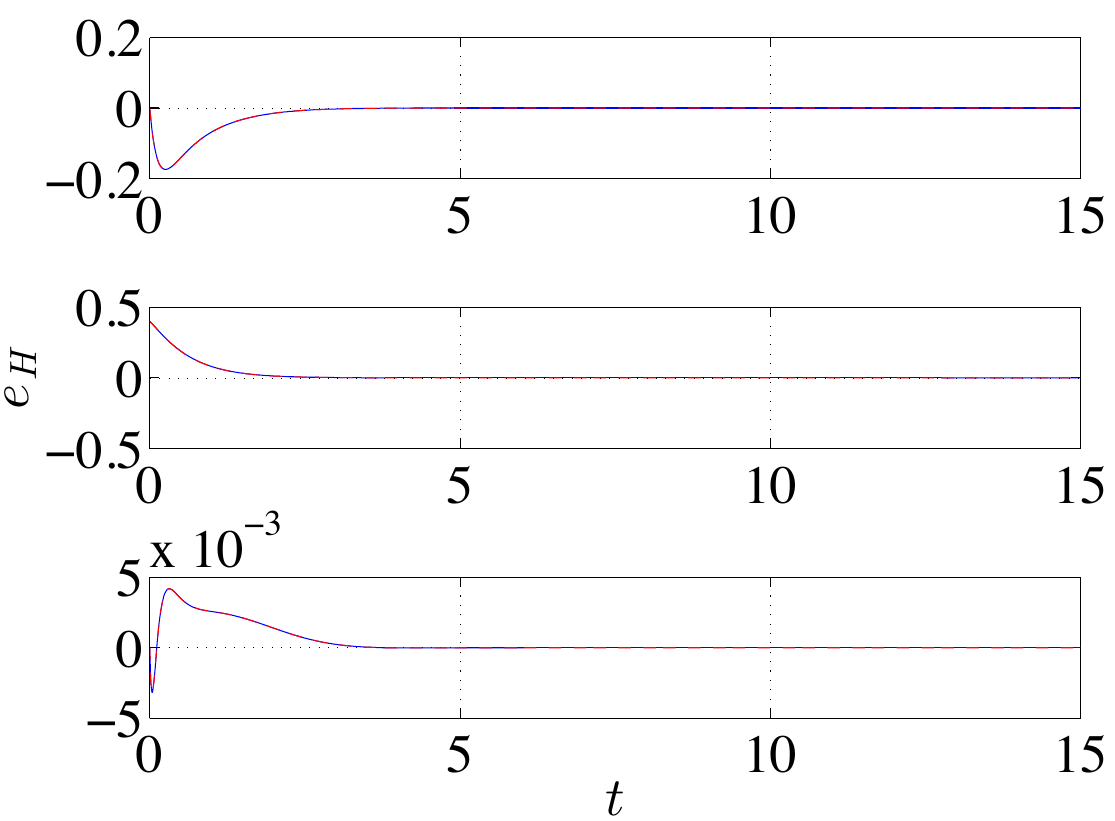}}
}
\centerline{
	\subfigure[Angular velocity error $e_\Omega$]{
		\includegraphics[width=0.49\columnwidth]{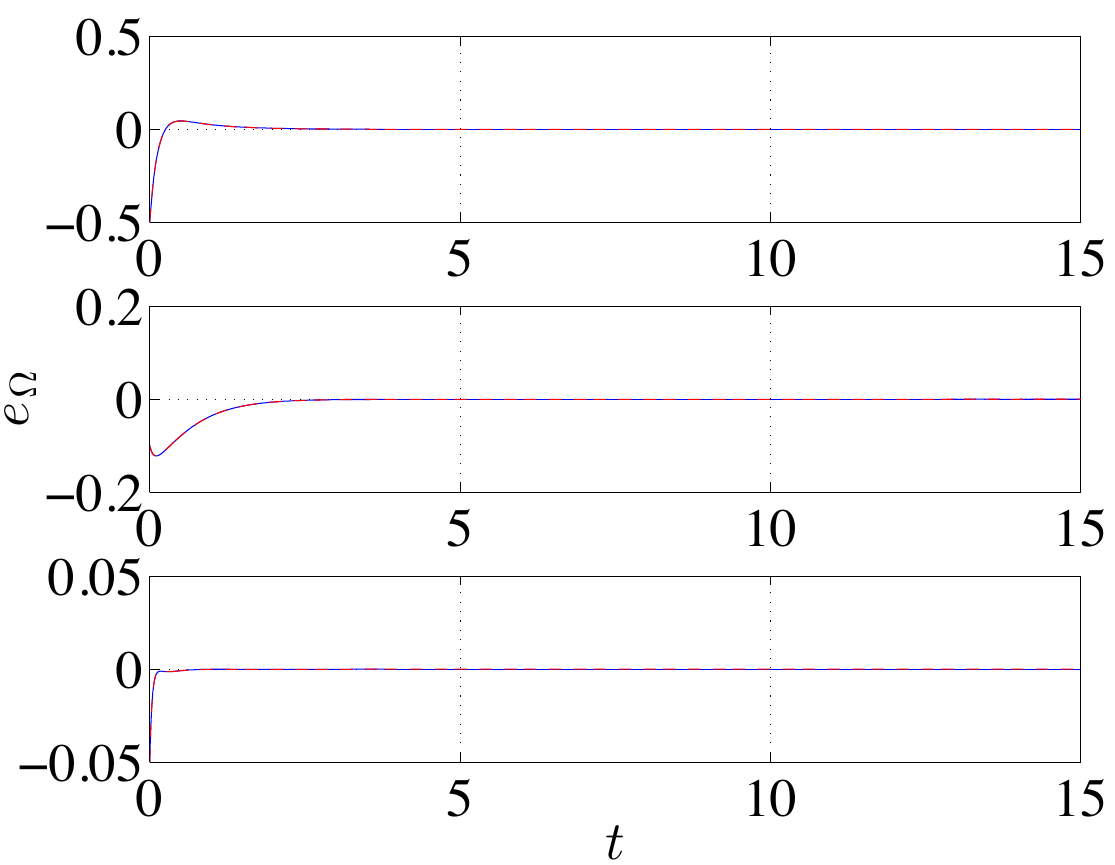}}
	\subfigure[Control input $u$]{
		\includegraphics[width=0.48\columnwidth]{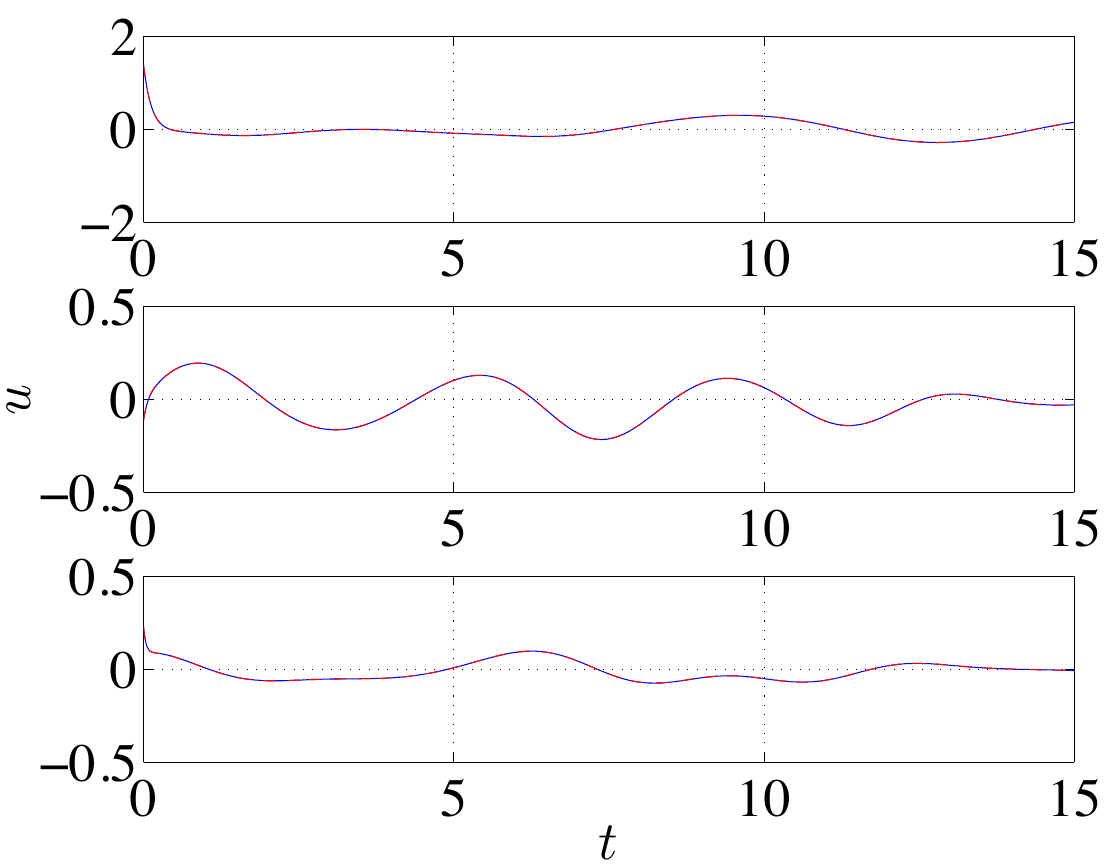}}
}
\caption{Case (i): Small initial attitude error without disturbances (blue,solid:smooth controller, red,dashed:hybrid controller)}\label{fig:1}
\end{figure}

\textit{Case (i):\quad} It is assumed that there is no disturbance, i.e., $\Delta=0$, and the initial conditions are chosen as $R(0)=I$ and $\Omega(0)=0$. This corresponds to a small initial attitude error, where $\Psi(0)=0.02$. The simulation results for the smooth control system and the hybrid control system without the integral control term term, developed at Propositions \ref{prop:AGASSO} and \ref{prop:GES} respectively, are illustrated at Figure \ref{fig:1}. They exhibit good tracking performances.  As the initial attitude error is small, no jump occurs at the hybrid control system, and the corresponding responses of the hybrid control system are identical to the smooth control system.

\textit{Case (ii):\quad} The second case is same as Case (i), except the initial condition chosen as $R(0) = \exp(0.9999\pi (r_{1_d}\times r_{2_d})^\wedge) R_d(0)$, $\Omega(0)=R(0)^T \omega_d(0)$, which is close to one of the undesired equilibrium. In this case, there is noticeable difference between the smooth controller and the hybrid controller, as illustrated at Figure \ref{fig:2}. For the smooth controller, the attitude tracking error does not change until after $t=12$ seconds. This is because the attitude error vector $e_r$ is close to zero initially, even though the initial attitude error is almost $180^\circ$. For the proposed hybrid control system, there is a mode switching from $\mb=\RII$ to $\mb=\RI$ at $t=3.74$ seconds, and the corresponding convergence rate is significantly faster.

\textit{Case (III):\quad} The initial condition is identical to Case (ii), representing a large initial attitude error. In this case, a fixed disturbance of $\Delta=[-0.4,0.8,0.4]^T$ is included. Figure \ref{fig:3} shows numerical results for the hybrid control system presented at Proposition \ref{prop:GES}, and the hybrid control system with an integral term presented at Proposition \ref{prop:RGES} with the initial estimate $\bar\Delta_H(0)=0$. The given fixed disturbance causes steady-state tracking errors for the hybrid control system developed at Proposition \ref{prop:GES}, but those errors are completely eliminated by the integral term of the hybrid control system developed at Proposition \ref{prop:RGES}. It also exhibits good convergence properties for the given large initial attitude error, which are comparable to the hybrid control system without disturbances illustrated at Figure \ref{fig:2}.

\section{Conclusions}

Four types of attitude tracking control systems are developed in this paper. A smooth attitude control system is presented for almost semi-global exponential stability, and a new form of synergistic attitude error functions are introduced for global exponential stability. They are further extended to obtain robustness with respect to a fixed disturbance. The main contribution is achieving global exponential stability on the special orthogonal group for all of the tracking error variables and the estimation errors in the presence of uncertainties. Future directions include generalizing the presented results into global adaptive attitude controls by incorporating parametric uncertainties in the attitude dynamics.

\appendix

\begin{figure}
\centerline{
	\subfigure[Attitude tracking error $\|R-R_d\|$]{
		\includegraphics[width=0.48\columnwidth]{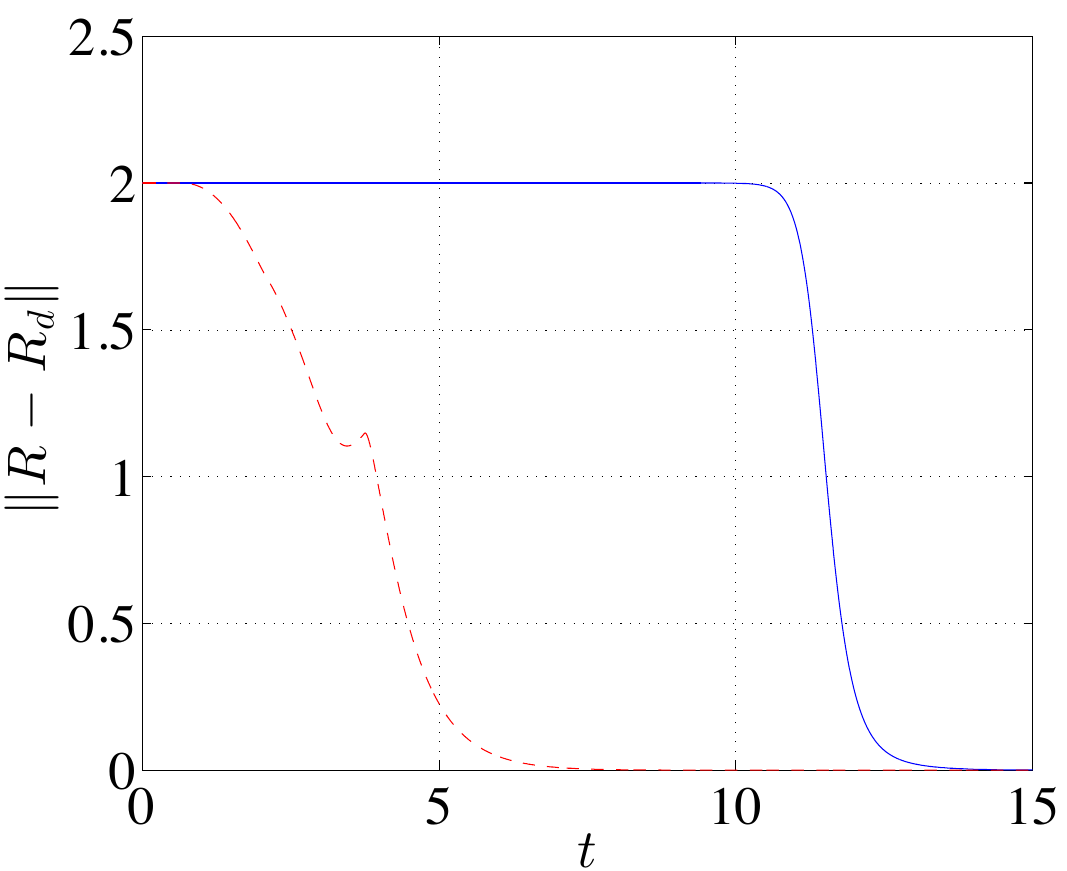}}
	\subfigure[Attitude error vector $e_H$]{
		\includegraphics[width=0.48\columnwidth]{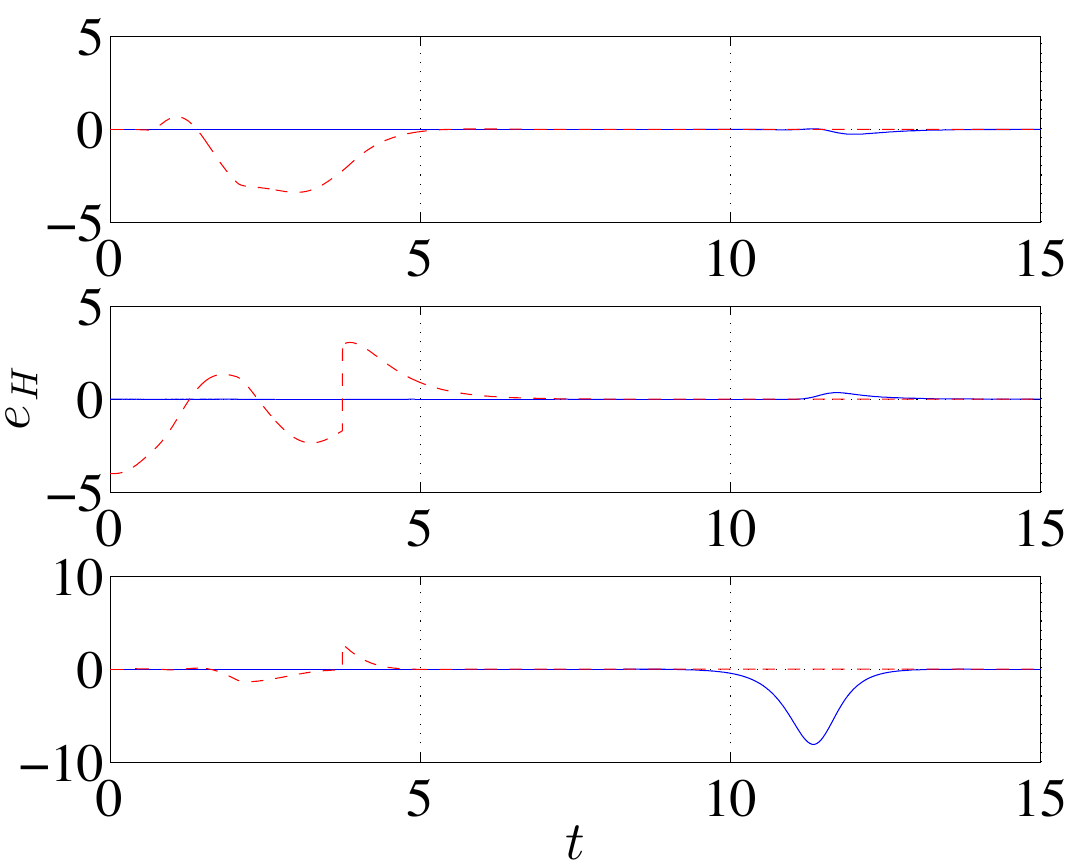}}
}
\centerline{
	\subfigure[Angular velocity error $e_\Omega$]{
		\includegraphics[width=0.48\columnwidth]{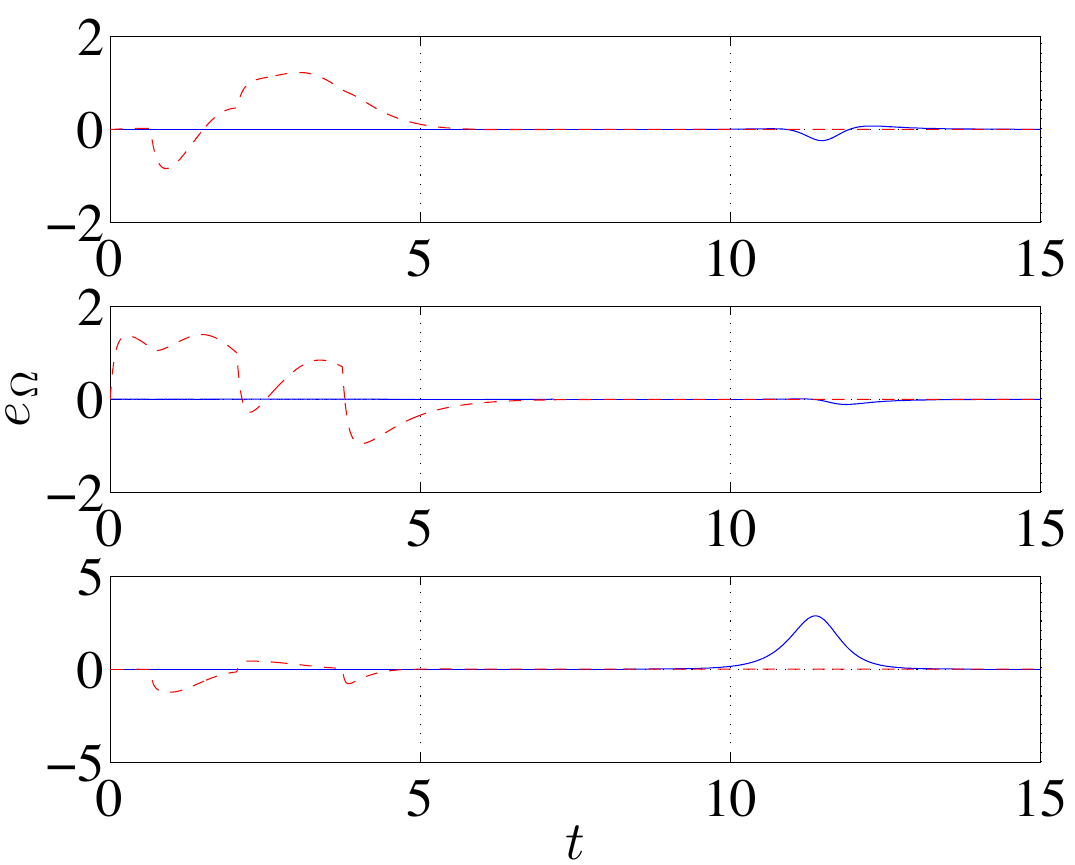}}
	\subfigure[Control input $u$]{
		\includegraphics[width=0.48\columnwidth]{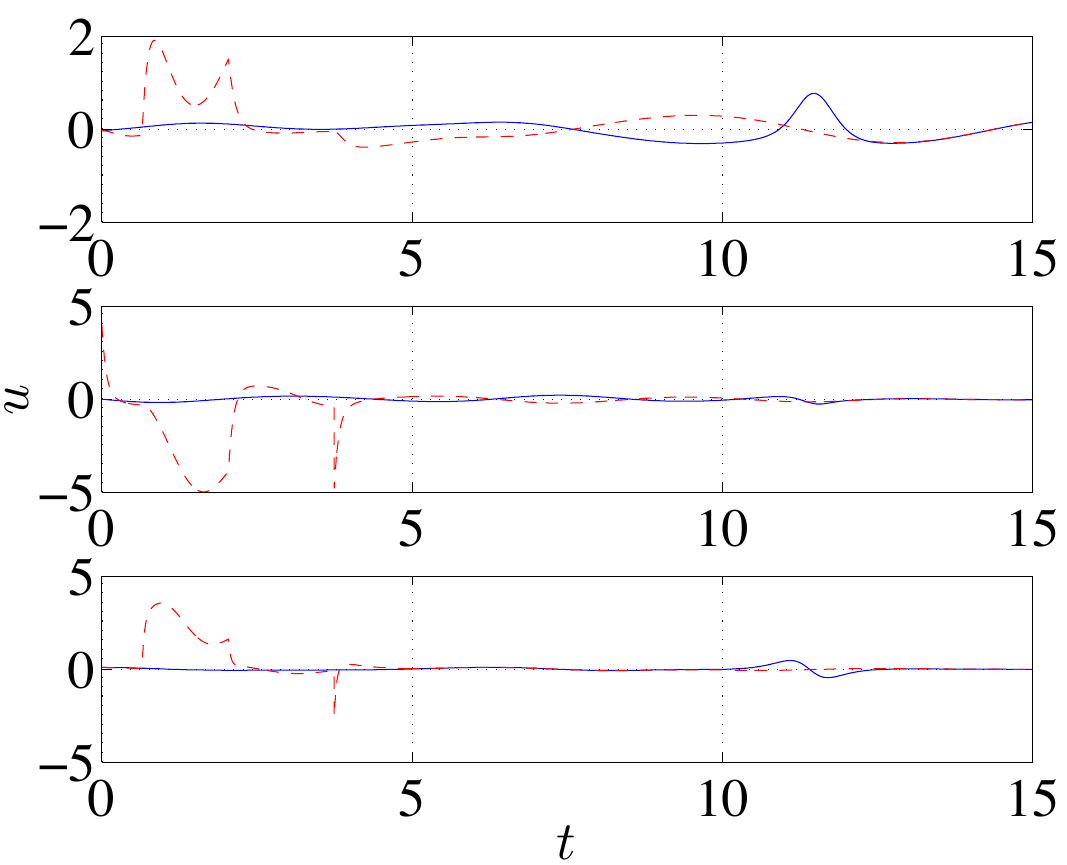}}
}
\caption{Case (ii): Large initial attitude error without disturbances (blue,solid:smooth controller, red,dashed:hybrid controller)}\label{fig:2}
\end{figure}

\begin{figure}
\centerline{
	\subfigure[Attitude tracking error $\|R-R_d\|$]{
		\includegraphics[width=0.48\columnwidth]{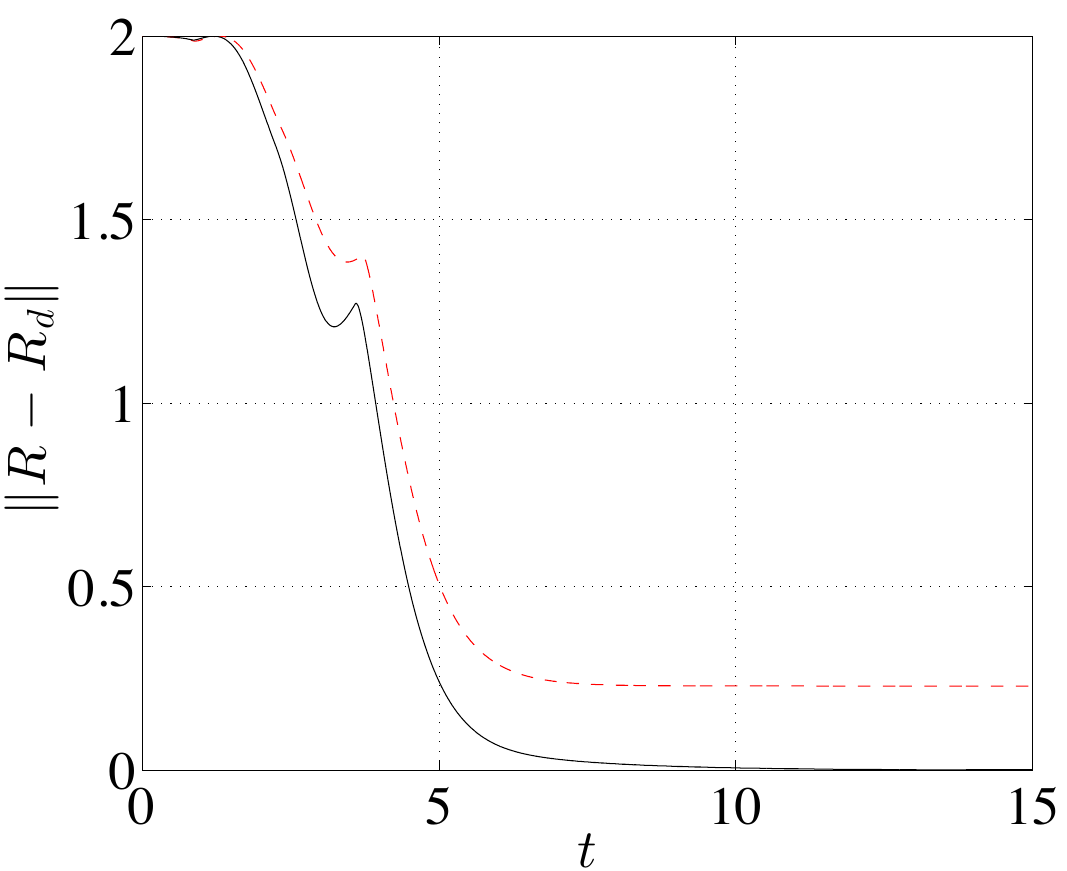}}
	\subfigure[Attitude error vector $e_H$]{
		\includegraphics[width=0.48\columnwidth]{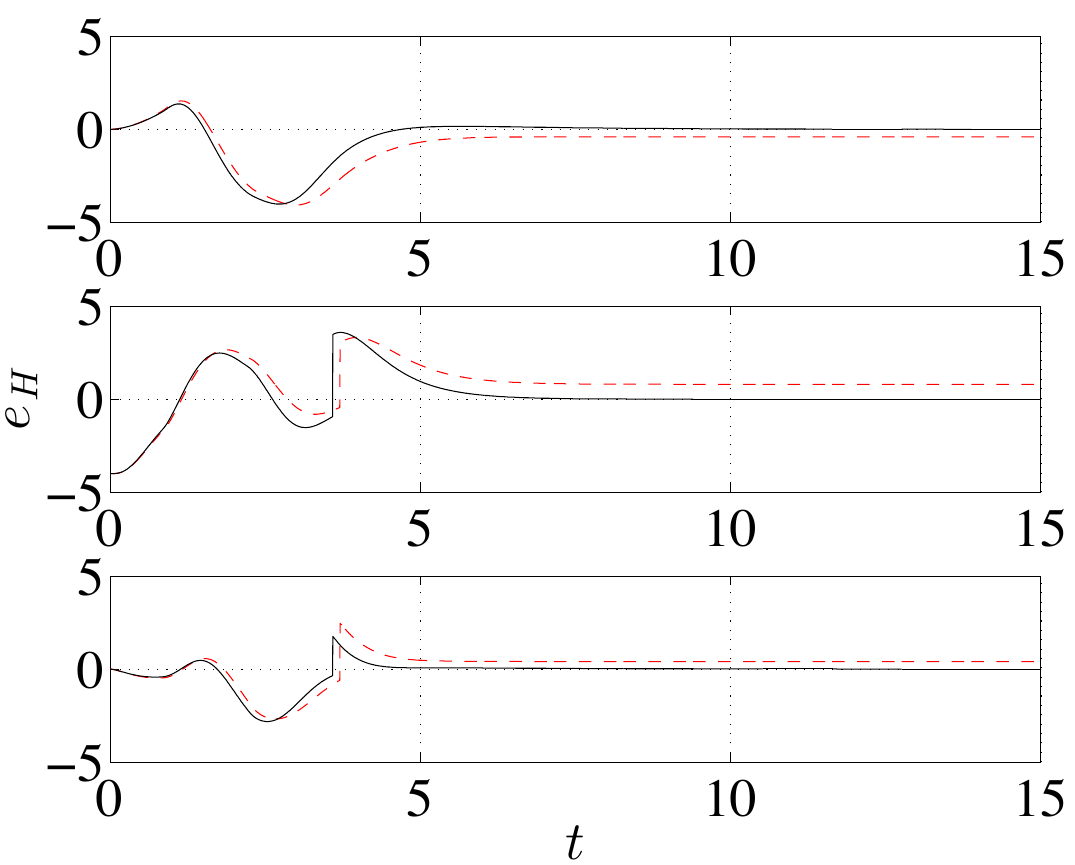}}
}
\centerline{
	\subfigure[Angular velocity error $e_\Omega$]{
		\includegraphics[width=0.48\columnwidth]{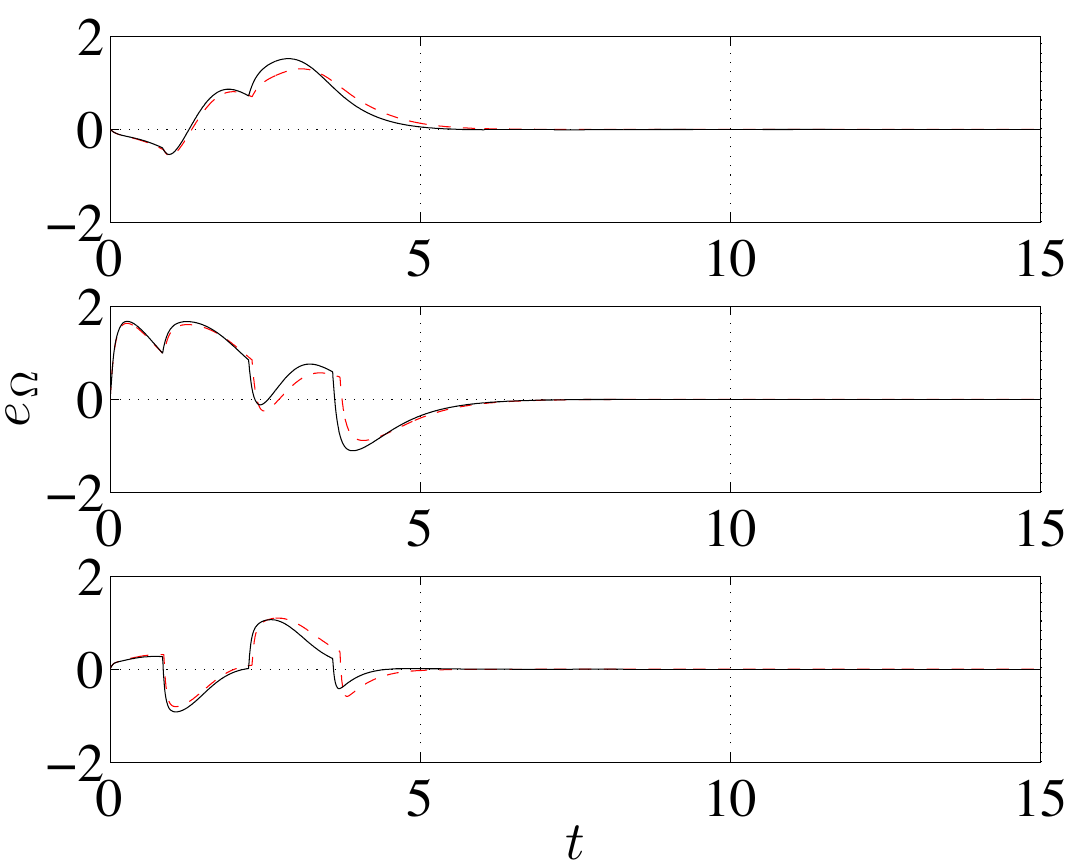}}
	\subfigure[Control input $u$]{
		\includegraphics[width=0.48\columnwidth]{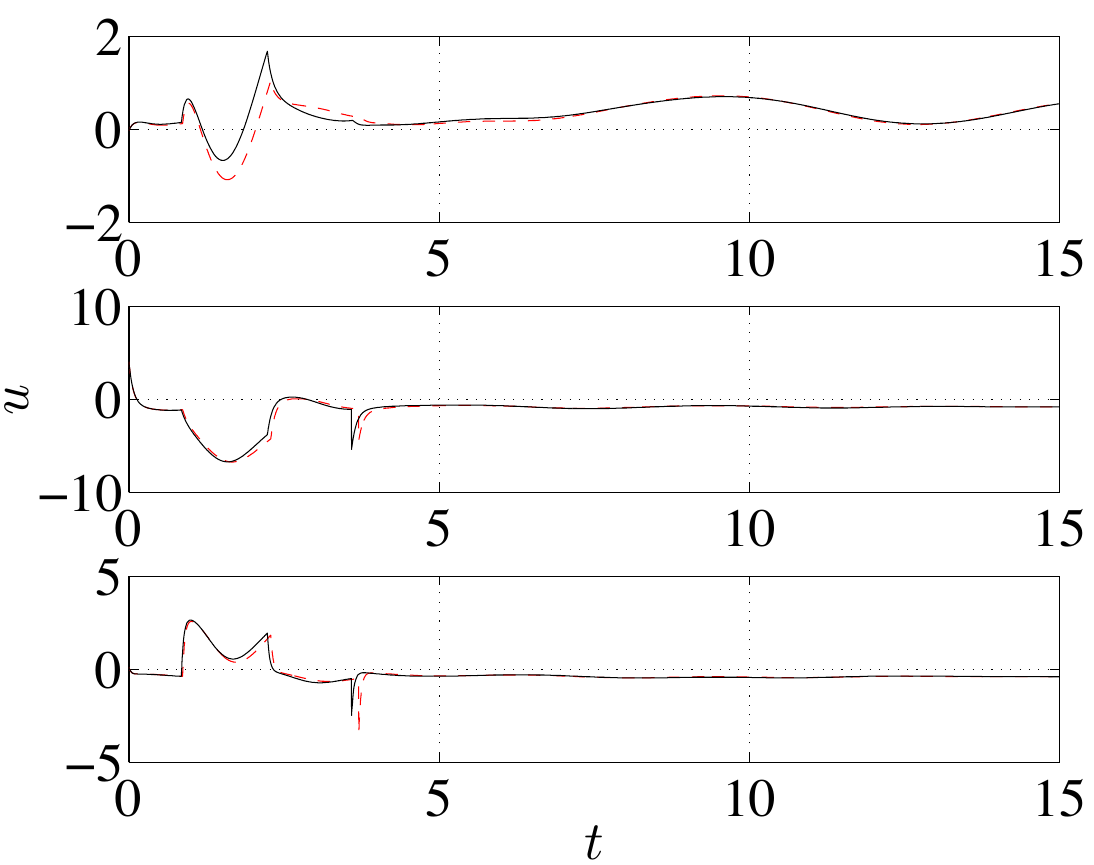}}
}
\centerline{
	\subfigure[Estimation error $e_\Delta$]{
		\includegraphics[width=0.48\columnwidth]{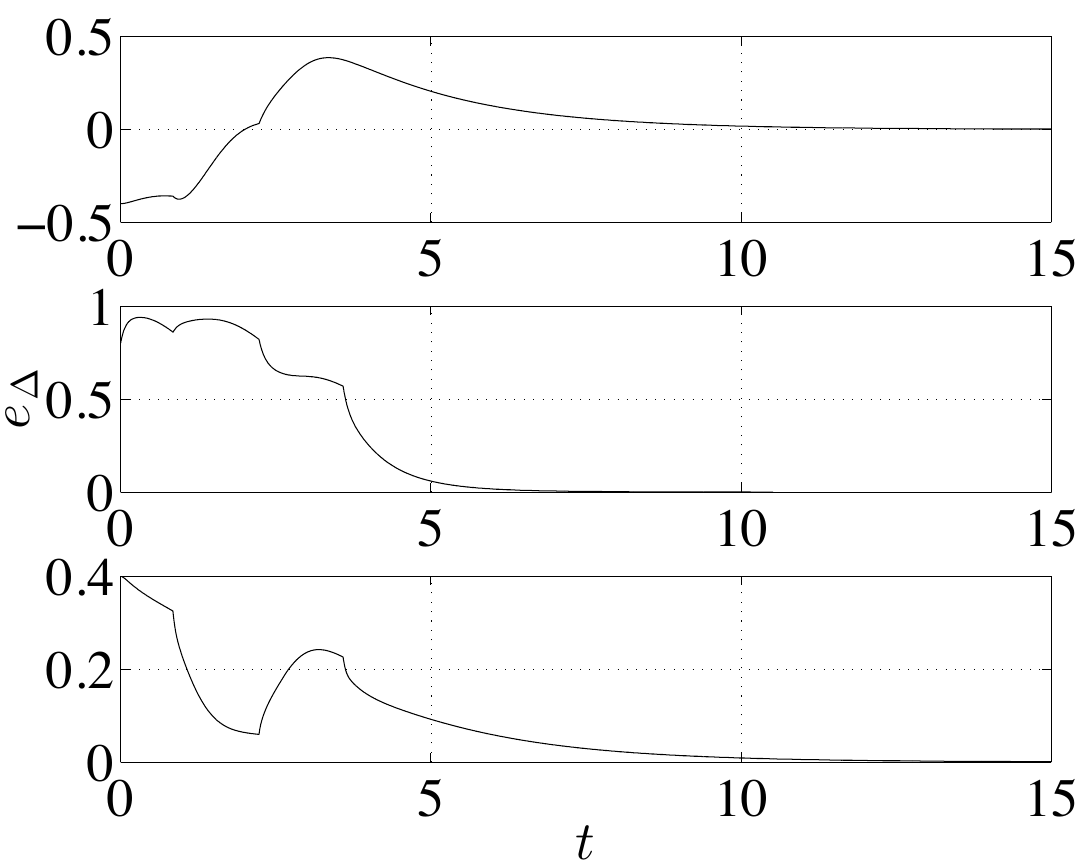}}
}
\caption{Case (iii): Large initial attitude error with disturbances (red,dashed:hybrid controller, black,solid:hybrid controller with an integral term)}\label{fig:3}
\end{figure}

\subsection{Proof of Proposition \ref{prop:errSO}}\label{sec:errSO}

From \refeqn{Rdot} and \refeqn{rddot}, and using \refeqn{STP} and \refeqn{RxR},
\begin{align*}
\dot \Psi_i & = -R\hat\Omega b_i\cdot r_{i_d} - Rb_i\cdot (\omega_d\times r_{i_d})\\
& = \Omega_i \cdot (R^T r_{i_d} \times b_i)- b_i\cdot (R^T\omega_d\times R^Tr_{i_d}).
\end{align*}
Substituting $\Omega=e_\Omega + R^T\omega_d$ into this, we obtain (i).

Using \refeqn{hatxy} and \refeqn{rdi}, $\hat e_{r_i}$ can be written as
\begin{align}
\hat e_{r_i} = b_i r_{i_d}^T R - R^T r_{i_d} b_i^T = b_i b_i^T R_d^T R - R^T R_d b_{i} b_i^T.\label{eqn:eri2}
\end{align}
%
Using \refeqn{RxR}, the time-derivative of $R_d^T R$ is given by
\begin{align}
\frac{d}{dt} (R_d^T R) = -R_d^T\hat\omega_d R + R_d^T R\hat\Omega = R_d^T R \hat e_\Omega.\label{eqn:Redot}
\end{align}
Therefore, we have
\begin{align*}
\hat{\dot e}_{r_i} & = b_i b_i^T R_d^T R\hat e_\Omega +\hat e_\Omega R^T R_d b_{i} b_i^T,\\
& = (\hat b_i \hat e_\Omega R^T R_d b_{i})^\wedge,
\end{align*}
where we used \refeqn{hatxy}. This shows (ii).

Since $x^Ty=\tr{xy^T}$ for any $x,y\in\Re^3$, we have
\begin{align}
\Psi  
& = k_1+k_2-\tr{R(k_1b_1r_{1_d}^T +k_2b_2r_{2_d}^T)}\nonumber\\
& =k_1+k_2-\tr{R(k_1b_1b_1^T+k_2b_2b_2^T)R_d^T}\nonumber\\
& = \tr{G(I-U^T R_d^TRU)},\label{eqn:Psi00}
\end{align}
where $G=\mathrm{diag}[k_1,k_2,0]\in\Re^{3\times 3}$, and $U=[b_1,b_2,b_1\times b_2]\in\SO$. From \refeqn{eri}, the error vector $e_r$ can be rewritten as
\begin{align}
\hat e_r = UGU^T R_d^TR -R^T R_d UGU^T.\label{eqn:er0}
\end{align}

Next, we use the following properties given in~\cite{FerChaPICDC11}. For non-negative constants $f_1,f_2,f_3$, let $F=\text{diag}[f_1,f_2,f_3]\in\Re^{3\times 3}$, and let $P\in\SO$. Define
\begin{gather} 
\Phi=\frac{1}{2}\text{tr}[F(I-P)],\label{eqn:Phi}\\
e_P=\frac{1}{2}(FP-P^{\T}F)^\vee,\label{eqn:eP}
\end{gather}
Then, $\Phi$ is bounded by the square of the norm of $e_P$ as
\begin{gather}
\frac{h_1}{h_2+h_3}\|e_P\|^2\leq \Phi \leq\frac{h_1h_4}{h_5(h_1-\phi)}\|e_P\|^2, 
\label{eqn:PhiB}
\end{gather}
if $\Phi<\phi<h_1$ for a constant $\phi$, where $h_i$ are given by
\begin{align*} 
h_1 &= \text{min}\{f_1+f_2,~ f_2+f_3,~ f_3+f_1\}, \\
h_2 &= \text{max}\{(f_1-f_2)^2,~ (f_2-f_3)^2,~ (f_3-f_1)^2\}, \\
h_3 &= \text{max}\{(f_1+f_2)^2,~ (f_2+f_3)^2,~ (f_3+f_1)^2\}, \\
h_4 &= \text{max}\{f_1+f_2,~ f_2+f_3,~ f_3+f_1\}, \\
h_5 &= \text{min}\{(f_1+f_2)^2,~(f_2+f_3)^2,~(f_3+f_1)^2\}.
\end{align*}

Note that if we choose $F=2G$ and $P=U^TR_d^TRU$, then we have $\Psi=\Phi$. Substituting these into \refeqn{eP},
\begin{align*}
\hat e_P &= (GU^TR_d^TRU-  U^TR^TR_dUG)\\
&= U^T(UGU^TR_d^TR- R^TR_dUGU^T)U= U^T \hat e_r U
\end{align*}
from \refeqn{er0}. Therefore, $\|e_P\|=\|U e_r\| = \|e_r\|$. Substituting this into \refeqn{PhiB}, we obtain \refeqn{PsibSO}.

\bibliography{CDC12.1}
\bibliographystyle{IEEEtran}

\end{document}